\documentclass[fleqn, 12pt]{amsart}

\usepackage{amsmath,amsfonts}
\usepackage[mathscr]{eucal}
\usepackage{array}
\usepackage{graphicx}
\usepackage{multirow}
\usepackage{color}
\usepackage[authoryear,sort]{natbib}

\allowdisplaybreaks[1]
\newtheorem{theorem}{Theorem}[section]

\newtheorem{proposition}[theorem]{Proposition}

\makeatletter
\@addtoreset{equation}{section}
\makeatother
\renewcommand\theequation{\arabic{section}.\arabic{equation}}
\newcommand{\RR}{{\mathbb R}}

\newcommand{\de}{\mathrm{d}}
\newcommand{\iac}{\mathrm{i}}
\renewcommand{\exp}{\operatorname{exp}}
\newcommand{\heaviside}{\operatorname{H}}
\newcommand{\sign}{\operatorname{sgn}}
\newcommand{\emath}{\mathrm{e}}
\newcommand{\ra}{\rightarrow}
\begin{document}

\title[The effect of forcing confidence]{Forcing confidence: a Process Tracing approach with a dynamical systems model}
 
\date{\today}
 
\author[A. Gheondea-Eladi]{Alexandra Gheondea-Eladi}
\address{Institutul de Cercetare a Calit\u a\c tii Vie\c tii, Academia Rom\^an\u a, Str. 13 Septembrie 1918, 
nr. 13, room 2.345, 050711 Bucure\c sti, Rom\^ania}
\email{a.gheondea.eladi@gmail.com}
\author[A. Gheondea]{Aurelian Gheondea} 
\address{Institutul de Matematic\u a ,,Simion Stoilow" al Academiei Rom\^ane, C.P.\ 
1-764, 014700 Bucure\c sti, Rom\^ania \emph{and} Department of Mathematics, Bilkent University, 06800 Bilkent, Ankara, 
Turkey} 
\email{A.Gheondea@imar.ro \textrm{and} aurelian@fen.bilkent.edu.tr } 

\begin{abstract}
We propose a continuous time dynamical system model for tracing the evolution of confidence in a small 
decision making group by consensus with the possibility that a forcing factor is exerted onto the 
confidence of the participants. We experimentally check whether a forcing factor appears, where it 
begins and ends and how it affects the evolution. We find that the equilibrium value of the 
confidence is lower in the model with the forcing factor than without it, and that the forcing factor 
can be identified and induces additional oscillations of the confidence level. This is probably one of 
the first times when a mathematical model is able to speak about visible effects on the confidence process 
tracing under 
alterations of its levels. Pragmatically we find a model that captures influence of participants' confidence level by 
observing oscillations and equilibrium and we experimentally test it with measures of individual "confidence that 
the decision is correct" throughout the group decision-making.
 \end{abstract} 

\subjclass{Primary 37N99; Secondary 91E10, 91E99}
%\keywords{VH-space, VE-space, admissible space, topologically admissible space}
\keywords{evolution of confidence,  dynamical systems, group decision, consensus, decision-making model}
\maketitle 

\section*{Introduction}

	Large-scale group decision-making is frequently subject to artificial modulations of participant confidence, often occurring independently of evidence-based accuracy (Liu, Xu, \& Herrera, 2019; Zarnoth \& Sniezek, 1997; Friedkin \& Johnsen, 1990). To preserve autonomous and informed collective processes, it is essential to distinguish between normative evidence accumulation and decisions driven by confidence levels that are decoupled from information quality (Lebreton et al., 2018; Votruba \& Kwan, 2015; Patalano \& LeClair, 2011). Because large-scale social decisions are often clustered within smaller entities, such as committees, families, or reference groups (Bernardo et al., 2021), identifying and representing these artificial confidence increases is critical for ensuring the autonomy of both small-group and aggregate outcomes. Over the past two decades, established theories of decision-making have been increasingly challenged by the complex dynamics of social influence in both physical and virtual networks (Denrell, 2008; Aral \& Walker, 2012; Butts, 2016).

This paper develops and validates a dynamical systems model that differentiates between cases where an external "forcing factor" acts upon participant confidence versus cases characterized by unconstrained consensus. Our framework reveals that the presence of a forcing factor induces a shift in equilibrium values and introduces unique oscillatory behaviours in the evolution of confidence (EoC). We specifically model scenarios in which participants seek consensus while simultaneously influencing one another's confidence levels. We define confidence as the subjective probability that a chosen response is correct, though we introduce additional controls for individual and group reference points in our methodology. In this context, we distinguish between subjective certainty (the decision-maker's introspection) and objective certainty, the latter being defined as the ratio between required knowledge and the knowledge actually possessed by the decision-maker (Zamfir, 2005). While previous work has addressed these concepts (Zamfir, 2005; Gheondea-Eladi, 2016; Gheondea-Eladi \& Gheondea, 2022), this paper clarifies the distinct vocabularies used in engineering (information fusion) and psychology to describe these internal states.

Investigating the impact of a forcing factor allows us to identify decisions influenced by artificial elevations in certainty, independent of available information, by tracing the process dynamics of confidence during discussion. Observed oscillations in the EoC may stem from group influence, information exchange, or increases in self-confidence (Zarnoth \& Sniezek, 1997; Patalano \& LeClair, 2011; Viscusi, Philips, \& Kroll, 2011) and solution certainty (Votruba \& Kwan, 2015). Currently, empirical and analytical methods cannot fully isolate whether these oscillations arise from steady information reception or artificial confidence boosts; therefore, a formal mathematical model is required to highlight these specific behaviours. Building on previous findings (Gheondea-Eladi, 2016), which showed that small-group EoC oscillates toward an equilibrium in approximately two-thirds of participants, expanded by more recent work (Gheondea-Eladi \& Gheondea, 2022) identifying a model bifurcation, in this paper, we introduce a forcing factor influencing participant confidence. This bifurcation produces two distinct states, oscillatory and non-oscillatory, allowing us to mathematically represent the two temporal dynamics of confidence (see Figure 1). Our results clarify: (a) the temporal onset and termination of the forcing factor, (b) the resulting impact on equilibrium values, and (c) the manifestation of forcing through additional EoC oscillations.

The forcing factor has significant cross-disciplinary applications, reflecting phenomena such as groupthink (Janis \& Mann, 1977) and nonverbal leadership cues (Bixtera \& Luhmann, 2020; Chemers, 2004; Locke \& Anderson, 2015; Estrada \& Vargas-Estrada, 2013). Instances of over- and under-confidence can be explained by EoC oscillations where discussions conclude before reaching equilibrium (Zarnoth \& Sniezek, 1997; Gheondea-Eladi, 2016). Unlike participation-based models, in which the forcing factor is the extent to which an agent participates actively or to which other agents influenced by them propagate their influence (Zarnoth \& Sniezek, 1997; Bernardo et al., 2021), our forcing factor focuses on dynamics acting directly upon confidence regardless of knowledge. Furthermore, because the relationship between self-reported confidence and group influence is mediated by task type and aggregation method (Zarnoth \& Sniezek, 1997), we can expect that if the likelihood of a group choosing any Social Combination model (Zarnoth \& Sniezek, 1997) is normally distributed in our experiment, half of the groups had confident and accurate/inaccurate members influence the decision and the rest did not. This gives additional evidence attesting to the importance of integrating the forcing factor into EoC models of group decision.

Finally, the proposed model addresses specific constraints in mathematical modeling. While it has been claimed that confidence dynamics in consensus groups follow an ascending oscillatory path (Zamfir, 2005), empirical evidence suggests these dynamics require an initial value problem based on a third-order differential equation (Gheondea-Eladi, 2016). We resolve the difficulty of motivating such a complex model by applying the "consensus effect" principle (Hoch, 1987, as cited in Stanovich, 1998) to aggregate two simpler models: a first-order model for objective certainty and a second-order model for subjective certainty. In non-perturbed cases, this aggregation is obtained via a coupling constant (Gheondea-Eladi \& Gheondea, 2022). In the presence of a forcing factor, we utilize a coupling function derived from the objective certainty solution, treating the first-order model as the rigid component and the second-order oscillatory model as the flexible component. To maintain computational tractability, we specify the forcing term as a linear combination of Heaviside functions, treating the resulting solutions as generalized distributions. In Figure~\ref{f:samples} we represent the solutions of the model with and without the forcing factor, for two participants. In the Results section we present the empirical reasons for which the new model closer represents the data. This fit is made visible in Figure~\ref{f:samples}.

\begin{figure}
\includegraphics[width=6cm]{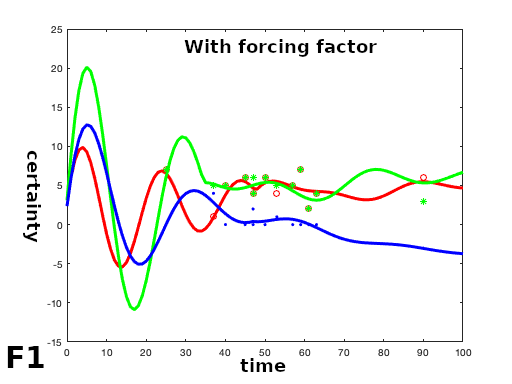}
\includegraphics[width=6cm]{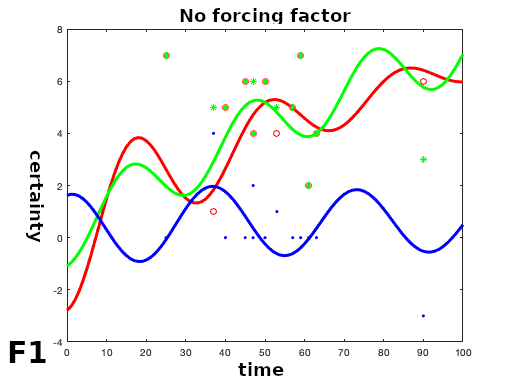}

\includegraphics[width=6cm]{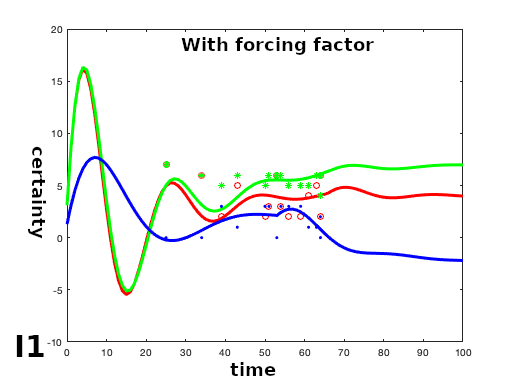}
\includegraphics[width=6cm]{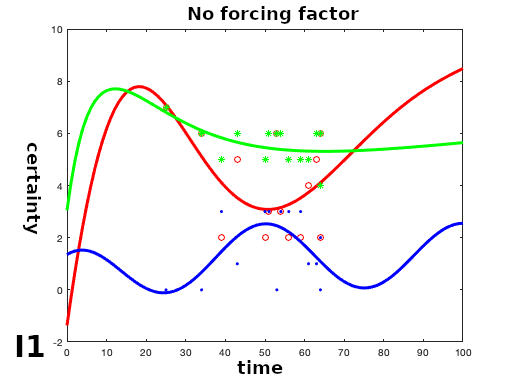}
\caption{A  comparison of the solutions on real data with the forcing factor model, on the left side, and without the forcing factor model, on the right side for two group participants identified as F1 and I1. The colors reflect three different ways of measuring the confidence, called CII, CIF and CI and described in the Methology section.}\label{f:samples}
\end{figure}

 \medskip 

%In order to ensure autonomous and informed group decision-making, it becomes important to identify group decisions 
%which are influenced in ways that are not connected to the accuracy of information. When dealing with decisions without known solution, being able to represent such influences could lead to major breakthroughs in autonomous group decision-making. This topic is particularly important for both large scale and small scale decision-making. Large scale group decision-making is sometimes influenced by artificial increases in participants' confidence and self-confidence irrespective of the accuracy 
%of information. At the same time, large scale group decision-making in social settings is many times done by clustering decisions in small groups, like committees, comissions, families, 
%personal reference groups or communities.  In this context,

\subsection*{Previous and Related Work.}
 Previous studies in group decision-making provide important contributions on artificial increases of members' confidence during group decisions, aggregation and fusion of preferences and information, and the use of dynamical systems for social psychology applications. In this section we provide a short overview of each of these.

{\bf{The Psychological Foundations of Group Consensus}}
Consensus-based group decisions represent a complex interplay between the exchange of relevant information and the modulation of participant confidence, which are not always correlated.  When consensus is reached independently of evidentiary quality, the resulting decisions may be considered "artificial". Such phenomena include  groupthink (Janis, 1977), over- and under-confidence (Dunning et al., 1990; Oskamp, 1982; Ronay et al., 2017) and deferral of decisions (Estrada \& Vargas-Estrada, 2013; Locke \& Anderson, 2015). Research has identified several proxies that frequently replace or distort information evaluation, such as solution frequency, group homogeneity (Krueger, 1998; Mackie, 1987), physiological linkage (Thorson et al., 2019). Social dynamics further modulate this process through peer pressure (Hoffman et al., 2001), entitativity (Campbell, 1958), and group polarization (Fraser, 1971; Zhu, 2014). Moreover, the influence of leadership, authority, and status (Johnson \& Ewens, 1971; Bixtera \& Luhmann, 2020; Chemers, 2004; Kwaadsteniet \& Dijk, 2010; Liebe \& Tutic, 2010) and targeted persuasion (Evans \& Clark, 2012) can drive participants toward consensus at confidence levels significantly lower than their initial internal states (Estrada \& Vargas-Estrada, 2013).

{\bf{Factors influencing subjective certainty (confidence)} }
Even when an objective solution is unknown, consensus can be maintained through high levels of subjective certainty or trust in the decision-making process.  Key factors influencing this internal confidence include advice-taking behavior (Hutter \& Fiedler, 2019; Larson et al., 2019) and the temporal sequencing of doubt and confidence expressions (Wichman et al., 2009). The framing of messages in confidence terms, particularly after priming, exerts a significant impact on final judgments (Tormala et al., 2008). These responses may be overt or hidden depending on a participant’s perception of their standing as a majority or minority member within the group (Noelle-Neumann, 1993; Thomas et al., 2015). Estrada and Vargas-Estrada (2013) present a model that captures the peer pressure as a function of the socio-cultural distance between individuals in a social group and show that the peer pressure level determines how fast a social group reaches consensus and that the levels of peer pressure determine the leaders who can achieve full control of their social groups. From a different perspective, Hutter \& Fiedler (2019) investigated the extent to which framing the very same information as either advice or anchor exerts a differential influence on judgments, in both humans and computers, and they found different levels of influence between advice and anchor. Based on bounded rationality and groupthink,  Liu et al. (2019) proposed a model with fuzzy preference relations with selfconfidence.

{\bf{Mathematical Frameworks for Information Fusion}}
From a formal perspective, group decision-making requires the fusion of heterogeneous preference structures to reach a unique agreement (Salerno, 2002; Bostrom et al., 2007).  The literature has addressed this through linguistic approaches (Perez, 2011), social network analysis (Estrada \& Vargas-Estrada, 2013), and fuzzy rank-level fusion (Sing et al., 2019). Traditional paradigms in multi-attribute decision making (MADM) focus on utility preferences and preference relations (Dong \& Xu, 2016) as well as fractional nonlinear dynamics (Abdulghafor \& Turaev, 2018), fuzzy relations with self-confidence parameters (Liu et al., 2019), and feedback mechanisms designed for minimum adjustment in large-scale systems (Zhang et al., 2020b). While common methodologies aggregate group variables (Okada \& Lee, 2016; Stengel, 2013; Schultze et al., 2012; Kraft et al., 2002; Galam \& Zucker, 2000) or utilize standard information fusion (Shuping et al., 2018; Jiao \& Li, 2021; Zhang et al., 2020a), our model focuses on the aggregation of certainty. We treat the final group outcome as the terminal point of an individual’s internal evolution of confidence (EoC), thereby modeling the temporal trajectory of confidence during social interaction rather than focusing solely on individual-group discrepancies.

{\bf{Dynamical Systems in Social Psychology}}
To trace this process, we utilize a Dynamical Systems (DS) model to represent the continuous evolution toward a decision threshold. DS frameworks, whether formulated as discrete-time recursive sequences or continuous-time ordinary differential equations (Lynch, 2004; Pham et al., 2018; Sadovnichiy \& Zgurovsky, 1986), are robust tools across both engineering and economics (Medio \& Lines, 2001). As surveyed by Hassani et al. (2022), classical consensus models such as the {\em{Ising}}, voter, and DeGroot models provide a foundation for understanding opinion dynamics.
While DS applications in social psychology emerged in the ‘80s and 90’s in fields like  perception and development (Vallacher \& Nowak, 1997), they have expanded into personality (Vallacher et al., 2002), women's sexual desire (Diamond, 1986, 2007), and clinical/educational contexts (Hosenfeld et al., 2015; Koopmans \& Stamovlasis, 2016). Social-level DS models have elucidated opinion dynamics (Deffuant et al., 2000; Friedkin \& Johnsen, 1990), network structures (Estrada \& Vargas-Estrada, 2013), and voting behavior (Jiao \& Li, 2021). Further DS approaches can be 
linked to interval valued fuzzy preference relations models, e.g. through DeGroot social network models or other (Shuping et al., 2018; Zhang et al., 2020b). An analysis of the dynamics of consensus building in group decision making by searching the consensus path with minimum adjustments lead researchers to use the mathematical theory of optimisation which has also been employed for a very long time as an extraordinary rich source with very powerful results (Dong and Xu, 2016). More recent mathematics have been used in the analysis of consensus, such as models with nonlinear dynamical systems, the speed of convergence to consensus, and fractional calculus (Abdulghafor and Turaev, 2018), feedback mechanism with minimum adjustment and cost (Zhang et al., 2020b), active opinion dynamics combined 
with Bayesian analysis (Jiao and Li, 2021) and quantum probability and Choquet integral (Xiao et al., 2024).

\section{The Mathematical Model}\label{s:md}

The DS model that we employ in this research relies on initial value problems associated to ordinary differential equations and, among other things, it has the advantage that allows simulations that
emphasise the bifurcation and an 
analysis of the stability to numerical perturbations. Another important advantage of the DS 
model is that the general model can be obtained through a subtle aggregation procedure of the objective certainty, which corresponds to a differential equation of order one, and the subjective 
certainty, which corresponds to a differential equation of order two. The resulting general model thus 
corresponds to a differential equation of order three that encompasses both the evolution of confidence and
the possible oscillatory behaviour. In the absence of the modelling process employed here, only a numerical solution of the third-order differential equation would 
have been possible and this would be less useful in understanding the EoC.

 Our approach is based on a DS model and there 
are a number of reasons to do so. Firstly, this model has the capacity to capture the oscillations of the EoC, 
especially in the case
of subjective certainty. The eight parameters that we use in the model for subjective confidence
have clear interpretations right from
the beginning and all the mathematical and numerical operations preserve these parameters in an explicit 
fashion. It is this trait that enables us to connect the oscillatory or non-oscillatory behaviour to the balance between sensitivity to the level of the confidence and 
the sensitivity to the rate of change of confidence: for a definition see (Gheondea-Eladi and Gheondea, 2022).  Secondly, the DS model has the potential of encoding a further analysis on 
overconfidence (Dunning et al., 1990; Plous, 1995; Giardini et al, 2008; Oskamp, 1982) that might show up in the EoC in decision-making groups by consensus. 
In this paper, this is done explicitly, by introducing more parameters which cover overconfidence by the forcing factor. 
And, thirdly, the DS model can be 
matched with existing models for decision-making in 
social networks which use systems of differential equations (Estrada and Vargas-Estrada, 2013). In this way, our model based on a single differential equation has the potential 
to be adapted in future studies of the EoC in social networks. 

\subsection*{Rationale for the components of the dynamical system model}
The rationale for the components of the dynamical system model are connected to the process traits of the EoC we intended to capture, the modelling techniques we employed (such as superposition), the type of individual interaction between individuals in the group and the theoretical background that is used as the basis of our model (subjective and objective confidence). In this section we will explain each of these. 
\begin{enumerate}
\item{{\bf{Expected behaviour of the certainty/confidence function.}}} From the onset, the model intended to capture oscillation and evolution to equilibrium (two mathematically qualitative aspects that have direct psychological interpretation), as predicted by (Zamfir, 2005). 
\item{{\bf{Modelling technique.}}} Knowing that oscillation and evolution to equillibrium occur in solutions to third order differential equations (ODE), we intended to construct such a model. However, for third order differential equation models it is usually very difficult to calculate the analytical solutions, while the interpretation of parameters may become concealed. 
\item{{\bf{Modelling individuals' interactions.}}} In order to solve the third order ODE problem, we modelled group interaction as a market-type interaction between providers and receivers of information.
\item{{\bf{Modelling certainty.}}} We used the individual interaction model to build both the first order and the second order model (subjective and objective certainty) that can be superposed to obtain the third order model. The rationale for which we can use this technique is based on Hoch’s principle (explained on pg. 2). The concepts of subjective and objective certainty, proposed by Zamfir (2005) have been detailed in Gheondea-Eladi (2016). 
\end{enumerate}
To support the reading of the following mathematical modelling subsection, we will summarize here the parameters that require psychological interpretation and which will be employed for the mathematical model:
\begin{itemize}
\item $\alpha$, which represents the sensitivity to the rate of change of certainty,

\item $\beta$, which represents the sensitivity to the absolute value of the certainty,

\item $\gamma$, which represents the level of certainty at equilibrium,

\item $n$ and $m$, the endpoints of the interval of time when the forcing factor appears,

\item $p$, the amplitude of the forcing factor.

\end{itemize}

In sections 1.1, 1.2, and 1.3 these psychological interpretations of the parameters in the model are carefully described in the process of mathematical modelling and the hypotheses that lead to the mathematical model are explicitly presented.

Furthermore, in order to connect the theorems and proofs to the empirical claims we need to follow the four stages of the mathematical modelling process: Formulating the Real Life Problem (presented in the Introduction, paragraph 1), Formulating the Mathematical Problem (summarised in the introduction, pg. 2, par. 2 and detailed in section 1), Finding the Mathematical Solution (Section 1.3), and Finding the Real Life Solution (Section 2.3 and 2.4). Furthermore, we purposefully structured the data analysis in the following manner: the subsection title makes reference to the Real World Problem (e.g. \lq When does the forcing factor begin and end?\rq ) and the first or second sentence in the section indicates which model parameter is connected to the behaviour we wanted to observe. For example, in section 2.4.1 labelled: Is there an observable forcing factor? we state: \lq we provide a histogram of the values of $p$, which represents the amplitude of the oscillation of the forcing factor\rq . The value $p$ is one of the parameters of the general model given in equations (1.29) and (1.30). We do the same for the other sections.

\subsection*{To fit or not to fit the model}
For the readers accustomed to seeing model fit analyses, there is an important note to make. Model fitness, as statistical analysis of the distances between the model and the empirical data would not add anything to our research question’s answers, since we do not aim to show that this is the best model of all, but that this model captures the desired behaviours. Our research question is mathematically qualitative. We consider this to be an even more important question than the quantitative one in light of the existing models (reviewed in the section on Previous and Related Work) which do not aim to capture these behaviours that the theory presents to be important. The value and the difficulty of the model we propose is given by two aspects. 
\begin{enumerate}
\item We have obtained an analytical model and not a statistical one and therefore we understand exactly the interpretations of the limits and capabilities of this model as well as the meaning of each parameter. 
\item We have obtained a model that captures exactly the aspects that were predicted theoretically and nothing extra that has no theoretical or mathematical interpretation. This is a major step that allows for further testing. In such cases the adequate measure of fit is with respect to the goals of the model, i.e. whether it captures the aspects predicted or not.
\end{enumerate}

\subsection{A Mathematical Model for Objective Certainty.}\label{ss:mmoc}
We consider a small group of people in a decision making process by consensus,
on a single issue (problem), in which the communication flow is a 
continuous variable, mathematically identified with the real positive
semi-axis, while the certainty level is a function of
communication. In this state space approach, all the participants and the subgroups are \emph{providers}
(suppliers) of information as well as \emph{receptors} 
(buyers) of information. The certainty with respect to the decision-making problem is considered 
a measurable quantity that reflects a compound concept. 
Briefly, we consider the following functions: $R(\theta)$ the \emph{receptor function}, 
and $P(\theta)$  the \emph{provider function}, 
both in the variable $\theta$ which denotes the \emph{certainty}. 
Both functions $R$ and $P$ are real valued one real variable functions in the state space describing the 
dynamical system of the decision-making process. 

For the beginning we consider the \emph{objective certainty} for which
the first assumptions on $R$ and $P$ are the following:

\begin{enumerate}
\item[(Ao)] Both functions $R$ and $P$ are linear with respect to the
  variable 
  $\theta$.\medskip
\item[(Bo)] The function $R$ is decreasing while the function $P$ is
  increasing with respect to $\theta$.
\end{enumerate}

The linearity of $R$ and $P$ is just a first
order approximation in order to keep the model as simple as possible. The second assumption
concerns the following reasoning. First, \emph{the higher the certainty
the lower is the receptor's interest in getting more information and the higher
is the provider's interest in offering more information}. Second, \emph{the
lower the certainty, the higher is the receptor's interest in getting
more information and the lower is the provider's interest in offering more
information}. 

We now introduce the following assumption which makes the difference when compared to the
previous article:

\begin{enumerate} 
\item[(Co)] In the communication process, the provider, deliberately or not, supplies,
starting at some time $m$ and stopping at some time $n>m$, a surplus of certainty of level $p$. 
\end{enumerate}
Mathematically this is modelled by a step function $f$
\begin{equation}\label{e:ff}
f(t)=\begin{cases} p, & m< t <n,\\ 
\frac{p}{2}, & t=m,n, \\ 0, & \mbox{ otherwise.}\end{cases}
\end{equation}
It is preferable to express the function $f$ in terms of the Heaviside function $H$
\begin{equation}\label{e:has}
H(t)=\begin{cases} 0, & t<0, \\ \frac{1}{2}, & t=0, \\
1,& t> 0,\end{cases}
\end{equation}
hence
\begin{equation}\label{e:feh}
f(t)=p\bigl(H(t-m)-H(t-n)\bigr),\quad t\in\RR.
\end{equation}

From these assumptions it follows that the formal
representations of the functions $R$ and $P$ are:
\begin{equation}\label{receptorprovider} R(\theta)=a -b\theta,\quad
  P(\theta)=c+d(\theta+f),\end{equation}
where all $a,b,c,d$ are positive real numbers: $b$ and $d$ are positive
because of the decreasing/increasing behavior of $R$ and $P$, while $a$ and
$c$ are positive because it is important to grant some nontrivial certainty at the
beginning (they can be taken nonnegative as well, but information with null
certainty does not seem to be a realistic premise). This restriction is also
based on some experimental observations (see Section~\ref{s:da}) and on
the problems imposed by the existence of \lq \lq irreducible uncertainty \rq \rq (Zamfir,
2005: 64).   

Then, as in (Gheondea-Eladi, 2016)
\begin{enumerate}
\item[(Do)] As the communication flows, the objective certainty $\theta$
considered as a function of communication $t$ changes  proportionally with the 
\emph{excess of certainty of the receptor over the provider}, that is, with
$R-P$. 
\end{enumerate}
In a mathematical formulation, this assumption can be written as the
following equation for the adjustment of the \emph{objective certainty}: 
\begin{equation}\label{adj}
  \theta(t+h)=\theta(t)+hk[R(\theta(t))-P(\theta(t))],\end{equation} 
where, 
$h>0$ is the length of the interval of communication, that is, the
interval $(t,t+h)$, and $k>0$ is a positive number that describes the
\emph{rate of change of the objective certainty as the response to the excess
  receptor/provider}. Since $k$ can be absorbed into other coefficients,
without loss of generality, from now on, $k$ is assumed to be $k=1$.

We can now proceed to determine a first order dynamical system approach that is suitable for \emph{objective
  certainty}, which is an idealised representation of the certainty.
Let us begin with the equation for the adjustment of the objective certainty
\eqref{adj}, taking into account that $h>0$ and using \eqref{receptorprovider}, hence
\begin{equation}\label{adjquotient}
  \frac{\theta(t+h)-\theta(t)}{h}=\bigl(R(\theta(t))
-P(\theta(t))\bigr)=a-c-(b+d)\theta(t)-df(t).\end{equation}

In order to continue the description of this continuous dynamical system we
have to make our fourth assumption.

\begin{enumerate}
\item[(Eo)] The objective certainty function $\theta$ is differentiable everywhere, but a finite number of points,
with respect to the variable $t$ of communication.
\end{enumerate}

Based on assumption (Eo) and taking into account the functional
representations of $R$ and $P$ as in \eqref{receptorprovider}, we can pass to
the limit as $h\rightarrow 0$ in \eqref{adjquotient} and get that for all $t$ but a finite number of them,
\begin{equation}\label{e:diff} \frac{\de\theta}{\de t}+(b+d)\theta(t)
=(a-c)-dp\bigl(H(t-m)-H(t-n)\bigr),\end{equation} where, we always have to keep in
mind that $\theta$ is a function of $t$, and hence that the ordinary
differential equation \eqref{e:diff} is valid for all but a finite number of $t\geq 0$. In more rigorous
mathematics, the ODE \eqref{e:diff} has to be understood in the distribution (generalised functions) sense.
In
addition, this is made into an Initial Value Problem (briefly, IVP), by specifying the value of $\theta$ at $0$, 
that is $\theta(0)=\theta_0$, a given constant.

The solution to the IVP \eqref{e:diff}, under the assumptions $0<m<n$ and $a,b,c,d>0$, 
as provided by {\tt dsolve} in MATLAB, is
\begin{align*}
\theta(t) & = \exp(-t (b + d)) \biggl((a \exp(b t + d t))/(b + d) - (c \exp(b t + d t))/(b + d) \\
& + (d p \heaviside(t - m) \exp(b m + d m))/(b + d) - (d p \heaviside(t - n) \exp(b n + d n))/(b + d) \\
& - (d p \heaviside(t - m) \exp(b t + d t))/(b + d) + (d p \heaviside(t - n) \exp(b t + d t))/(b + d)\biggr) \\
& + (\exp(-t (b + d)) (c - a + b \theta_0 + d \theta_0))/(b + d)
 \end{align*}
equivalently,
 \begin{align}
\theta(t) & = \frac{a-c}{b+d}- \frac{d p}{b+d} \bigl(\heaviside(t - m) - \heaviside(t-n)\bigr) \label{e:thetaunuar}\\
& \ \ \ -\emath^{-t(b+d)} \biggl[\bigl(\frac{a-c}{b+d}-\theta_0\bigr) - \frac{dp}{b+d} \biggl(\heaviside(t - m) \emath^{(b + d) m} 
-  \heaviside(t - n) \emath^{(b  + d) n}\biggr)\biggr]. \nonumber
 \end{align}
  
\begin{figure}
\includegraphics[width=12cm]{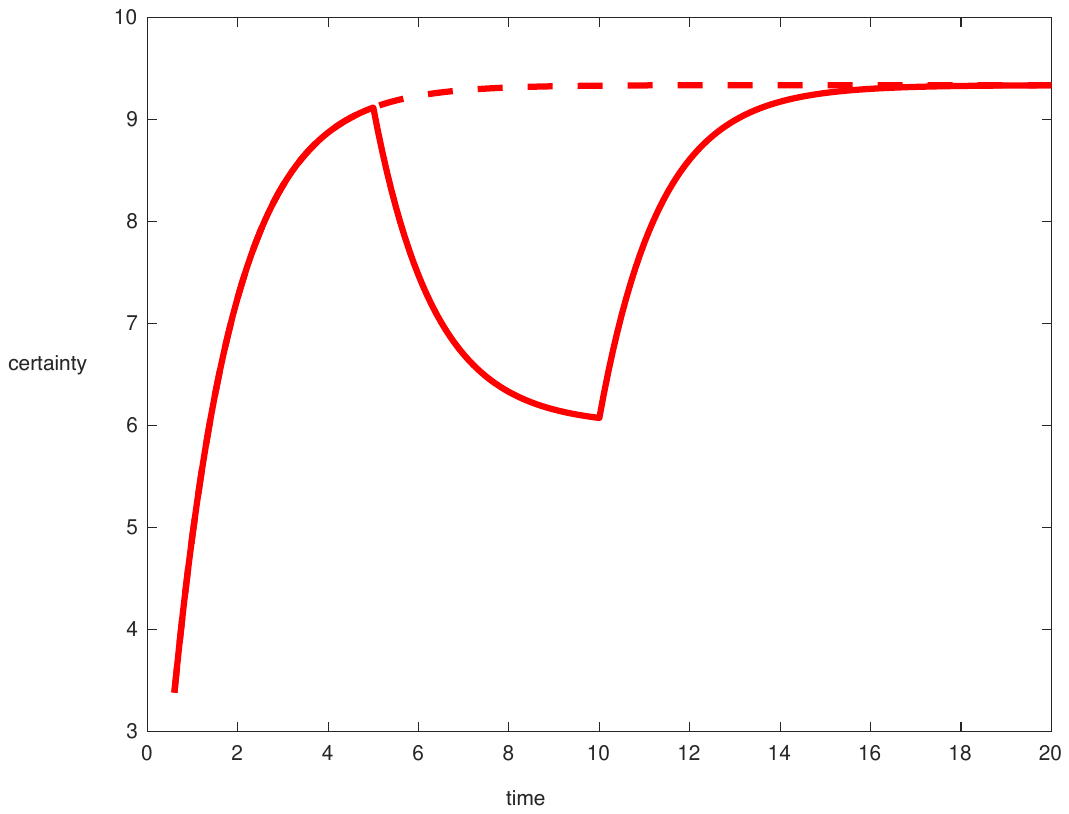}
\caption{A simulation of the evolution of the objective certainty for $a=7$, $b=0.5$, $c=0$, $d=0.25$, $m= 5$, 
$n=10$, $p=10$,  and $\theta_0= 0$ (the graph in continuous line), compared with the unperturbed objective  
certainty with $a=7$, $b=0.5$, $c=0$, $d=0.25$, $\theta_0= 0$ (the graph in dashed line).}\label{f:simord1}
\end{figure}
  
 As in the unperturbed case, see (Gheondea-Eladi, 2016), we consider the equilibrium constant
 \begin{equation}\label{e:equc}
 \overline\theta=\frac{a-c}{b+d}.
 \end{equation}
 In \eqref{e:thetaunuar} we point out on the first row the equilibrium function,
 \begin{equation}\label{e:equf}
 t\mapsto \frac{a-c}{b+d}- \frac{d p}{b+d} \bigl(\heaviside(t - m) - \heaviside(t-n)\bigr),
 \end{equation} and, on the second row we point out the exponential function with
negative exponent that ensures the evolution to equilibrium, that has as a factor the evolution function 
\begin{equation}\label{e:eva}
t\mapsto\bigl(\frac{a-c}{b+d}-\theta_0\bigr) - \frac{dp}{b+d} \biggl(\heaviside(t - m) \emath^{(b  + d) m} 
-  \heaviside(t - n) \emath^{(b  + d) n}\biggr).
\end{equation}

Comparing to the unperturbed evolution of the certainty, the novelty is that two of the constants, the 
equilibrium constant $\overline\theta$ and the evolution constant $\overline\theta-\theta_0$, 
 become functions in the forcing perturbed case. The 
equilibrium function is constant except on the interval $[m,n]$, where it has a gap. The meaning of the gap
is that, on the time interval when the forcing perturbation occurs, the equilibrium level is smaller and, 
consequently, for $n$ large, e.g.\ larger than the time when the group activity ends, it becomes the new 
equilibrium constant. From the point of view of social psychology, this means that when a forcing factor occurs participants in the group reach consensus at a certainty level which is lower than the level they would have reached if there was no forcing factor. As it has been presented in Introduction there are cases in which one could expect this to happen, such as delegation of responsibility and so on.
The evolution function \eqref{e:eva} is a step function with three levels, meaning that there is a small 
oscillation
with nodes at $m$ and $n$, see Figure~\ref{f:simord1}, where the perturbation occurs in the time
interval $[5,10]$.

\subsection{A Mathematical Model for Subjective Certainty.}\label{ss:mmsc}
A realistic assumption in the case of the \emph{subjective certainty} 
is that there is a delay in the reactions of the receptors and the providers
of information in the decision-making group, mathematically expressible by the
fact that both the receptor and the provider functions  
vary with $\theta$ and $\theta^\prime$ as well. Therefore, 
we have the following assumptions: 
\begin{enumerate}
\item[(As)] Both functions $R$ and $P$ are linear with respect to the
variable $\theta$ and its rate of change $\theta^\prime$.\medskip
\item[(Bs)] The function $R$ is separately
decreasing, while the function $P$ is
separately increasing, with respect to both $\theta$ and $\theta^\prime$.
\end{enumerate}
These assumptions are justified similarly as the assumptions (Ao) and (Bo) from the preceding subsection. As in (Gheondea-Eladi, 2016), the dependence on $\theta^\prime$ is justified by the need to take into account group members' rate of change of the certainty and not only its absolute value. More than this, the sensitivity to the rate of change of the certainty has been shown to split participants' evolution of certainty in two states --- oscillating and non-oscillating --- which have motivated the inquiry into the oscillatory behaviour resulting from the presence of a forcing factor (Gheondea-Eladi and Gheondea, 2022).

We now introduce the following assumption:
\begin{enumerate} 
\item[(Cs)] In the communication process, the provider deliberately supplies,
starting at some time $m$ and stopping at some time $n>m$, a surplus of certainty of level $p$. 
\end{enumerate}
Mathematically this is modelled by a step function $f$ and the Heaviside function as in 
\eqref{e:ff}--\eqref{e:feh}.
Based on these assumptions, we have the following formal
representations of the functions $R$ and $P$:
\begin{equation}\label{receptorprovider2} R(\theta,\theta^\prime)=a
  -b\theta-g\theta^\prime,\quad 
  P(\theta,\theta^\prime)=c+d(\theta+f)+e(\theta^\prime+f^\prime),\end{equation}
where all $a,b,c,d,e,g$ are positive real numbers.

The main assumption, in this case, is the following:
\begin{enumerate}
\item[(Ds)] As the communication flows, the \emph{rate of change of
certainty}
considered as a function of communication $t$, changes  
proportionally with the 
\emph{excess of the receptor over the provider}, that is, with $R-P$.
\end{enumerate}
In a mathematical formulation, this assumption can be written as the
following equation for the adjustment of the subjective certainty:
\begin{equation}\label{adj2}
  \theta(t+h)=\theta(t)+h\theta^\prime(t) +
 \frac{1}{2}h^2[R(\theta,\theta^\prime)-P(\theta,\theta^\prime)],\end{equation} 
where, 
$h>0$ denotes the length of a very small interval of communication, and the
factor $1/2$ is inserted, in view of the Taylor formula, for consistency of
the constants.

On the other hand, since these assumptions refer to second order approximation
of $\theta$, we need some assumptions on the smoothness of $\theta$. In this case,
we have the following:
\begin{enumerate}
\item[(Es)] The certainty function $\theta$ is twice 
differentiable everywhere, but a finite number of points, with respect to $t$.
\end{enumerate}
Mathematically, assumption (Ds) is translated by the second order
Taylor approximation of $\theta$
\begin{equation}\label{thetataylor}
  \theta(t+h)=\theta(t)+h\theta^\prime(t)+h^2\frac{\theta^{\prime\prime}(t)}{2}+\mathrm{o}(h^3), 
\end{equation} for $h$ sufficiently small, where the notation $\mathrm{o}(h^3)$ refers to other terms of
order $3$ or higher as functions of $h$, that will be ignored. Thus, from
\eqref{receptorprovider2}, \eqref{adj2}, and \eqref{thetataylor},
\begin{align} \theta(t)+h\theta^\prime(t)+ & h^2\frac{\theta^{\prime\prime}(t)}{2}+\mathrm{o}(h^3)
  = \theta(t) + h\theta^\prime(t) \nonumber \\ & \phantom{=}+ \frac{1}{2}h^2
  \biggl( (a-c)-(b+d)\theta(t)-df(t)-(e+g)\theta^\prime(t)-ef^\prime(t)\biggr)+\mathrm{o}(h^3).
\label{doua}
\end{align} 
By the uniqueness of the Taylor representations in \eqref{doua} the coefficients corresponding to $h^n$, for $n=0,1,2,\ldots$, can be identified, accordingly. For
$n=0$  and $n=1$ nothing is obtained. 
Identification of coefficients of $h^2$ in
\eqref{doua} gives us
the second order linear differential equation with constant coefficients
\begin{equation}\label{e:diffdoi} \frac{\mathrm{d}^2
    \theta}{\mathrm{d}t^2} +(e+g)\frac{\mathrm{d}\theta}{\mathrm{d}t}
    +(b+d)\theta =(a-c)+df(t)+ef^\prime(t),   
\end{equation} in the general non-homogeneous form, equivalently, in terms of the representation of the perturbation function $f$ as in \eqref{e:has},
\begin{equation}\label{e:diffdoieq}
\frac{\mathrm{d}^2
    \theta}{\mathrm{d}t^2} +(e+g)\frac{\mathrm{d}\theta}{\mathrm{d}t}
    +(b+d)\theta =(a-c)+dp\bigl(H(t-m)-H(t-n)\bigr)+ep\bigl(\delta(t-m)-\delta(t-n)\bigr),
\end{equation}
where $\delta$ denotes the Dirac "function". With notation,
that simplify the formulae and turn out to be useful since they have 
relevant interpretations from the social psychology point of view, namely,
\begin{equation}
\alpha=\frac{e+g}{2},\quad \beta=b+d,\quad \gamma=a-c,
\end{equation}
the ODE \eqref{e:diffdoieq} becomes
\begin{equation}\label{e:diffdoieqref}
\frac{\mathrm{d}^2
    \theta}{\mathrm{d}t^2} +2\alpha\frac{\mathrm{d}\theta}{\mathrm{d}t}
    +\beta\theta =\gamma+dp\bigl(H(t-m)-H(t-n)\bigr)+ep\bigl(\delta(t-m)-\delta(t-n)\bigr),
\end{equation}
We emphasise that the ODE \eqref{e:diffdoieqref} has to be understood in the distribution (generalised 
functions) sense. This 2nd order ODE is usually accompanied by the initial conditions
\begin{equation}\label{e:initial} \theta(0)=\theta_0,\quad \theta^\prime(0)=\theta^\prime_0,
\end{equation} and \eqref{e:diffdoi} together with \eqref{e:initial} make the
\emph{initial value problem}, briefly IVP.

Firstly, let us observe that, although
the ODE \eqref{e:diffdoieqref} 
is considered in the distribution sense, what we can use in concrete situations is a 
solution of function type. Secondly, from the point of view of
numerical calculations and approximation of the solution by the least square method what we have to see is 
whether the solution is robust or not. More precisely,
an analytical expression of the solution that is not robust means that small variations of the
discriminant about $0$ may cause huge errors in calculations and approximations. 
We solve these problems in the following theorems.

\begin{theorem}\label{t:2ode}
The IVP  \eqref{e:diffdoieqref} and \eqref{e:initial}
has unique solution defined as follows: if $\alpha^2-\beta\neq 0$ then
\begin{align}\label{e:2odesola}
\theta(t) & = \frac{\gamma}{\beta} - \frac{dp}{\beta}\bigl(H(t-m)-H(t-n)\bigr)\\
& + \frac{\emath^{-\alpha t}}{2\sqrt{\alpha^2-\beta}}
\biggl[ \bigl((\theta_0-\frac{\gamma}{\beta})(\alpha+\sqrt{\alpha^2-\beta})+\theta_0^\prime\bigr)  
\emath^{t\sqrt{\alpha^2-\beta}} \nonumber 
+ \bigl((\theta_0-\frac{\gamma}{\beta})(-\alpha+\sqrt{\alpha^2-\beta})-\theta_0^\prime\bigr)
\emath^{-t\sqrt{\alpha^2-\beta}}\biggr]\nonumber \\
& + \frac{ep\emath^{-\alpha t}}{2\sqrt{\alpha^2\!-\!\beta}}\biggl[H(t\!-\!m)\emath^{m\alpha}
\bigl(\emath^{-(m-t)\sqrt{\alpha^2\!-\!\beta}}\!-\!\emath^{(m-t)\sqrt{\alpha^2\!-\!\beta}}\bigr)
 - H(t\!-\!n)\emath^{n\alpha}
\bigl(\emath^{-(n-t)\sqrt{\alpha^2\!-\!\beta}}\!-\!\emath^{(n-t)\sqrt{\alpha^2\!-\!\beta}}\bigr)\biggr]\nonumber
\end{align}
and, if $\alpha^2-\beta= 0$ then
\begin{align}\label{e:2odesoldisnula} \theta(t) & = 
\frac{\gamma}{\alpha^2}-\frac{dp}{\alpha^2}\bigl[H(t-m)-H(t-n)\bigr] \\
& \phantom{\frac{\gamma}{\alpha^2}+}
+\emath^{-\alpha t}\biggl[ 
(\theta_0-\frac{\gamma}{\alpha^2})+\bigl(\theta_0^\prime-(\frac{\gamma}{\alpha^2}-\theta_0)\alpha\bigr)t %\nonumber \\
% & \phantom{\frac{\gamma}{\alpha^2}+} 
-ep(m-t)H(m-t)\emath^{m\alpha}+ep(n-t)H(n-t)\emath^{n\alpha}
\biggl] .\nonumber
\end{align}
In addition, if $\alpha^2-\beta<0$ then, letting $\omega=\sqrt{\beta-\alpha^2}$, the solution $\theta$ defined
as in \eqref{e:2odesola} has the following representation
\begin{align}
\theta(t) & = \frac{\gamma}{\beta}-\frac{dp}{\beta}\bigl(H(t-m)-H(t-n)\bigr) \label{e:2odesolcoma}\\
& \ \ \ +\emath^{-\alpha t} \times \nonumber \\
& \ \ \ \ \times\! \biggl[\! \biggl( (\theta_0 -\frac{\gamma}{\beta})\!-\!\frac{ep}{\omega}\bigl[ \emath^{m\omega}
\sin(m\omega) H(t-m)-\emath^{n\omega}\sin(n\omega)H(t-n)\bigr]\!\biggr)\!\cos(\omega t) \nonumber \\
& \ \ \ \ \ \ \ \ +\biggl(\frac{\theta_0^\prime +\alpha(\theta_0 -\frac{\gamma}{\beta})}{\omega}  +\frac{ep}{\omega}\bigl[ \emath^{m\omega}\cos(m\omega)H(t-m)-\emath^{n\omega}
\cos(n\omega)H(t-n)\bigr]\biggr)\sin(\omega t)\biggr].\nonumber
\end{align}
\end{theorem}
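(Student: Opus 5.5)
The plan is to treat the IVP \eqref{e:diffdoieqref}, \eqref{e:initial} as a linear constant-coefficient problem for the operator $L=\frac{\de^2}{\de t^2}+2\alpha\frac{\de}{\de t}+\beta$, whose characteristic roots are $\lambda_\pm=-\alpha\pm\sqrt{\alpha^2-\beta}$. By linearity, the solution is the sum of three pieces:
\begin{itemize}
\item the solution $\theta_1$ of $L\theta=\gamma$ with the data $(\theta_0,\theta_0^\prime)$;
\item the solution $\theta_2$ of $L\theta=dp\bigl(H(t-m)-H(t-n)\bigr)$ with zero data;
\item the solution $\theta_3$ of $L\theta=ep\bigl(\delta(t-m)-\delta(t-n)\bigr)$ with zero data.
\end{itemize}

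\textbf{Uniqueness.} Take the difference of two distributional solutions on $(0,\infty)$ with the same data. It satisfies $Lu=0$ in $\mathcal D'$. A distributional solution of a homogeneous constant-coefficient ODE is a classical $C^\infty$ solution, since the operator is elliptic in one variable. Hence $u$ is a combination of $\emath^{\lambda_\pm t}$, or of $\emath^{-\alpha t}$ and $t\emath^{-\alpha t}$ when $\alpha^2=\beta$. The zero initial data then force $u\equiv 0$. Because $0<m<n$, the initial conditions at $t=0$ are unaffected by the singular forcing.

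\textbf{Existence.} For $\theta_1$, write $\theta_1=\gamma/\beta+v$, where $v$ solves the homogeneous equation with data $(\theta_0-\gamma/\beta,\theta_0^\prime)$. Solving the $2\times2$ system for the coefficients of $\emath^{\lambda_\pm t}$ gives exactly the first bracket in \eqref{e:2odesola}. For the impulses I will use the Green's function (causal fundamental solution)
\[
G(t)=H(t)\,\frac{\emath^{\lambda_+t}-\emath^{\lambda_-t}}{\lambda_+-\lambda_-}=H(t)\,\frac{\emath^{-\alpha t}}{2\sqrt{\alpha^2-\beta}}\bigl(\emath^{t\sqrt{\alpha^2-\beta}}-\emath^{-t\sqrt{\alpha^2-\beta}}\bigr).
\]
It satisfies $LG=\delta$, $G(0)=0$ and $G^\prime(0^+)=1$. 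Then $\theta_3=ep\bigl(G(t-m)-G(t-n)\bigr)$, and factoring out $\emath^{-\alpha t}\emath^{m\alpha}$ reproduces the last line of \eqref{e:2odesola}. Equivalently, one can take the Laplace transform and read everything off from $(s^2+2\alpha s+\beta)^{-1}$, which is how I would double-check the signs. For $\theta_2$, the step response is $\theta_2=dp\bigl(S(t-m)-S(t-n)\bigr)$, where
\[
S(t)=\int_0^t G=\frac{H(t)}{\beta}\bigl(1+\text{transient in }\emath^{\lambda_\pm t}\bigr).
\]

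\textbf{Degenerate and complex cases.} For $\alpha^2=\beta$ I would either redo the computation with $G(t)=H(t)\,t\emath^{-\alpha t}$, or pass to the limit $\sqrt{\alpha^2-\beta}\to0$ in \eqref{e:2odesola}. In the limit, $\frac{\sinh(s\sqrt{\alpha^2-\beta})}{\sqrt{\alpha^2-\beta}}\to s$, which gives the terms linear in $t$ and $(t-m)$, $(t-n)$ in \eqref{e:2odesoldisnula}. This limit argument is also the robustness statement the authors announce: the solution depends continuously on the discriminant across $0$. For $\alpha^2<\beta$, substitute $\sqrt{\alpha^2-\beta}=\iac\omega$ in \eqref{e:2odesola} and use Euler's formulas, so that $\frac{\emath^{\iac\omega s}-\emath^{-\iac\omega s}}{2\iac\omega}=\frac{\sin(\omega s)}{\omega}$. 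Then expand $\sin(\omega(t-m))$ by the addition formula, which splits off the $\cos(\omega t)$ and $\sin(\omega t)$ coefficients in \eqref{e:2odesolcoma}.

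\textbf{Main obstacle.} The delicate step is the bookkeeping of the step-forcing contribution $\theta_2$. The exact step response contains, besides the jump $\pm dp/\beta$, transient terms $\emath^{\lambda_\pm(t-m)}$ and $\emath^{\lambda_\pm(t-n)}$ multiplied by $H(t-m)$ and $H(t-n)$. I must also match the sign convention between \eqref{e:diffdoieqref} (forcing $+dp$) and the stated formula (which shows $-dp/\beta$, consistent with \eqref{e:diffdoi} derived from $R-P$). So I would first compute $\theta_2$ explicitly via Laplace transform, then compare it term by term with the displayed formulas. Where the transients do not appear in the statement, I would make the identification explicit, for instance by checking whether they are absorbed in the Heaviside-weighted brackets. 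Otherwise I would record the corrected additional terms, with the exponential-factor matching in the oscillatory case being the most error-prone part.
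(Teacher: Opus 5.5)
Your superposition and Green's-function route is sound. It reproduces two of the three pieces of \eqref{e:2odesola} exactly: the homogeneous bracket with data $(\theta_0-\gamma/\beta,\theta_0^\prime)$, and the impulse part, since $ep\bigl(G(t-m)-G(t-n)\bigr)$ is literally the last line. The gap is the step you leave undecided, and it cannot be closed the way you hope. The step-response transients are not absorbed anywhere, and the statement as printed is false whenever $dp\neq0$.

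The decisive observation is continuity. The printed $\theta$ jumps by $-dp/\beta$ at $t=m$ and by $+dp/\beta$ at $t=n$, because the homogeneous part and $G(t-m)$, $G(t-n)$ are continuous there ($G(0)=0$). A piecewise smooth function with a jump $J$ at $m$ has $J\,\delta^\prime(t-m)$ in its distributional second derivative. The right-hand side of \eqref{e:diffdoieqref} contains only $H$ and $\delta$, so every distributional solution is continuous, and the printed function is not one. It does satisfy the equation classically on $(0,m)$, $(m,n)$, $(n,\infty)$, but only with $-dp$ in place of $+dp$. That is the appendix version \eqref{e:2odea}, as you suspected, and in that piecewise sense there is no uniqueness.

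Carried through, your plan gives, with $r=\sqrt{\alpha^2-\beta}$ and $v$ your homogeneous part,
\[
\theta(t)=\frac{\gamma}{\beta}+v(t)+dp\bigl(S(t-m)-S(t-n)\bigr)+ep\bigl(G(t-m)-G(t-n)\bigr),\qquad S(s)=\frac{H(s)}{\beta}\Bigl(1-\emath^{-\alpha s}\Bigl(\cosh(rs)+\frac{\alpha}{r}\sinh(rs)\Bigr)\Bigr).
\]
The factor multiplying $\emath^{-\alpha s}$ is real for imaginary $r$ and tends to $1+\alpha s$ as $r\to0$. This transient is what switches the step on continuously, so, contrary to your wording, the exact step response has no jump at all. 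You should state plainly that this formula, not \eqref{e:2odesola}, is what your argument proves.

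Your route also exposes two misprints you did not flag.
\begin{enumerate}
\item In \eqref{e:2odesoldisnula} the factors $H(m-t)$, $H(n-t)$ must read $H(t-m)$, $H(t-n)$; your limit $\sinh(rs)/r\to s$ produces the latter. As printed, $\theta(0)=\theta_0+ep\,(n\emath^{n\alpha}-m\emath^{m\alpha})\neq\theta_0$.
\item In \eqref{e:2odesolcoma} the factors $\emath^{m\omega}$, $\emath^{n\omega}$ must read $\emath^{m\alpha}$, $\emath^{n\alpha}$, because $ep\,G(t-m)=\frac{ep}{\omega}\emath^{-\alpha t}\emath^{m\alpha}H(t-m)\sin\bigl(\omega(t-m)\bigr)$.
\end{enumerate}

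The paper's own proof catches none of this. It transcribes MATLAB \texttt{dsolve} output in which the $dp$-terms already appear as $\emath^{-t\lambda_2}\cdot\emath^{t\lambda_2}(\sign(m-t)-1)$ (and likewise with $\lambda_1$). Those are pure steps with the transients missing, and the proof then merely rearranges them. In the degenerate case it checks the particular solution separately on $(-\infty,m)$, $(m,n)$, $(n,\infty)$, which never tests the matching conditions at $m$ and $n$, where the error lives. Your Green's-function argument is the more reliable route; it just has to be followed to its conclusion instead of being reconciled with the printed formula.
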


We defer the rather long and technical proof of Theorem~\ref{t:2ode} to the Appendix.
The fact that the solution to the IVP \eqref{e:diffdoieqref} and \eqref{e:initial} has two different analytical representations depending
whether $\alpha^2-\beta$ is null or not null, may be the source of troubles from the numerical purposes. The
next theorem says that this is not the case, more precisely, for numerical purposes it is sufficient to use
the analytical expression from \eqref{e:2odesola}.

\begin{theorem}\label{t:robust} With notation as in Theorem~\ref{t:2ode}, the solution $\theta$ is uniformly
continuous on compact subsets with respect to the parameters $b,d,e,g,m,n>0$, $m<n$,
$a,c,\theta_0,\theta_0^\prime\in\mathbb{R}$ and $t\in\RR_+\setminus\{m,n\}$.
\end{theorem}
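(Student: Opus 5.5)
The idea is to write the solution $\theta$ of Theorem~\ref{t:2ode} in a single form that is manifestly jointly continuous in all the parameters, including across the set where $\alpha^2-\beta$ changes sign. Uniform continuity on compact subsets then follows from the Heine--Cantor theorem.

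We introduce the entire functions
\begin{equation*}
C(z,s)=\sum_{k\geq 0}\frac{z^k s^{2k}}{(2k)!},\qquad S(z,s)=\sum_{k\geq 0}\frac{z^k s^{2k+1}}{(2k+1)!}.
\end{equation*}
For $z\neq 0$, with either choice of square root, they satisfy $C(z,s)=\cosh(s\sqrt z)$ and $S(z,s)=\sinh(s\sqrt z)/\sqrt z$. Both are real analytic, hence continuous, in $(z,s)\in\RR^2$. For $z<0$ they reduce to $\cos(s\omega)$ and $\sin(s\omega)/\omega$ with $\omega=\sqrt{-z}$, and at $z=0$ they equal $1$ and $s$. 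The first step is to regroup the bracketed terms of \eqref{e:2odesola}, setting $z=\alpha^2-\beta$:
\begin{align*}
\theta(t)&=\frac{\gamma}{\beta}-\frac{dp}{\beta}\bigl(H(t-m)-H(t-n)\bigr)+\emath^{-\alpha t}\Bigl[(\theta_0-\tfrac{\gamma}{\beta})\bigl(C(z,t)+\alpha S(z,t)\bigr)+\theta_0^\prime S(z,t)\Bigr]\\
&\ \ \ +ep\Bigl[H(t-m)\emath^{-\alpha(t-m)}S(z,t-m)-H(t-n)\emath^{-\alpha(t-n)}S(z,t-n)\Bigr].
\end{align*}
This is pure bookkeeping: the coefficients of $\emath^{\pm t\sqrt z}$ in \eqref{e:2odesola} combine into $\cosh$ and $\sinh$ divided by $\sqrt z$. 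The same computation with $z=0$ reproduces \eqref{e:2odesoldisnula}, after one checks the sign conventions for the Heaviside terms there. With $z<0$ it reproduces \eqref{e:2odesolcoma}. By the uniqueness statement of Theorem~\ref{t:2ode}, this single formula therefore represents the solution for every value of $\alpha^2-\beta$. This also justifies the remark that \eqref{e:2odesola} alone suffices numerically, since it is just the analytic continuation of this formula.

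The second step is continuity. The map $(b,d,e,g,a,c,m,n,p,\theta_0,\theta_0^\prime,t)\mapsto\theta(t)$ is built from the following pieces:
\begin{itemize}
\item the rational functions $\alpha=(e+g)/2$, $\beta=b+d$, $\gamma=a-c$, $\gamma/\beta$ and $dp/\beta$, which are continuous because $\beta=b+d>0$;
\item exponentials;
\item the continuous functions $C$ and $S$;
\item the Heaviside factors $H(t-m)$ and $H(t-n)$.
\end{itemize}
The Heaviside factors are the only discontinuous ingredients. They are locally constant on the open set $\{t\neq m,\ t\neq n\}$ of the joint parameter space, so $\theta$ is jointly continuous there. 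On the open region $t\in\RR_+\setminus\{m,n\}$ of the statement, that is on each of the connected pieces $t<m$, $m<t<n$ and $t>n$, the formula is a finite sum of continuous functions.

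Finally, any compact subset $K$ of this open parameter region lies in the domain of continuity, so Heine--Cantor gives uniform continuity on $K$. The main obstacle, and the step to do carefully, is the first: verifying that the regrouping really coincides with all three displayed formulas, in particular the $\delta$-forced terms. In \eqref{e:2odesolcoma} these appear as $\emath^{m\omega}\sin(m\omega)$-type expressions, which should be checked against $\emath^{\alpha m}$ and the addition formula $\sin(\omega(t-m))=\sin\omega t\cos\omega m-\cos\omega t\sin\omega m$.

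One caveat concerns the claim of uniformity across the jumps. If $K$ touches $t=m$ or $t=n$, the jump of size $dp/\beta$ (and for $\theta^\prime$ the jump $ep$) prevents uniform continuity. This is why those points are excluded in the statement, and compactness must be understood within the open region.
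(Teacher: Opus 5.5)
Your argument is correct and is essentially the paper's own. The paper likewise expands the exponentials in powers of $\alpha^2-\beta$ to remove the apparent singularity at $\alpha^2=\beta$; your $C(z,s)$ and $S(z,s)$ are precisely the series that \eqref{e:2odesolseries} writes out term by term, and your Heine--Cantor step makes rigorous the paper's closing remark that uniform convergence of these series on compacta gives uniform continuity.

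The two points you hedge on are genuine typos in the paper, not sign conventions, and the paper's own limit computation repeats the first one:
in \eqref{e:2odesoldisnula} the factors $H(m-t)$, $H(n-t)$ must read $H(t-m)$, $H(t-n)$, since as printed $\theta(0)=\theta_0+ep(n\emath^{n\alpha}-m\emath^{m\alpha})$ and continuity across $\alpha^2=\beta$ fails;
and in \eqref{e:2odesolcoma} the factors $\emath^{m\omega}$, $\emath^{n\omega}$ must read $\emath^{m\alpha}$, $\emath^{n\alpha}$ (compare \eqref{e:thetadoi}).
Once these are corrected, the agreements are direct algebraic identities, so your appeal to the uniqueness part of Theorem~\ref{t:2ode} is unnecessary. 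It is also unsafe: those formulas jump by $-dp/\beta$ at $t=m$, which no distributional solution of the second-order equation can do.
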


\begin{proof}
 We first consider $\alpha^2-\beta\neq 0$ and then, by using the power series expansions of
$\emath^{t\sqrt{\alpha^2-\beta}}$, $\emath^{-t\sqrt{\alpha^2-\beta}}$, $\emath^{(m-t)\sqrt{\alpha^2-\beta}}$,
$\emath^{-(m-t)\sqrt{\alpha^2-\beta}}$, $\emath^{(n-t)\sqrt{\alpha^2-\beta}}$, and 
$\emath^{-(n-t)\sqrt{\alpha^2-\beta}}$, it follows that the analytical form of $y$ as in \eqref{e:2odesola} becomes
\begin{align}\label{e:2odesolseries}
\theta(t) & = \frac{\gamma}{\beta} - \frac{dp}{\beta}\bigl(H(t-m)-H(t-n)\bigr)
+\emath^{-\alpha t}\biggl[\theta_0-\frac{\gamma}{\beta} 
+\sum_{k=1}^\infty \frac{a_k}{k!} (\alpha^2-\beta)^{\frac{k-1}{2}} t^k\biggr] \\
& \ \ + ep\emath^{-\alpha t}\biggl[H(t\!-\!n)\emath^{n\alpha} 
\sum_{l=0}^\infty \frac{(\alpha^2\!-\!\beta)^{l}(n\!-\!t)^{2l+1}}{(2l+1)!}
\!-\!H(t\!-\!m)\emath^{m\alpha} 
\sum_{l=0}^\infty \frac{(\alpha^2\!-\!\beta)^{l}(m\!-\!t)^{2l+1}}{(2l+1)!}\biggr],
\nonumber
\end{align}
where, for all integer numbers $k\geq 0$,
\begin{equation}
a_k=\begin{cases} (\theta_0-\frac{\gamma}{\beta})\sqrt{\alpha^2-\beta}, & k=2l,\ l\geq 0, \\
\alpha(\theta_0-\frac{\gamma}{\beta})+\theta_0^\prime, & k=2l-1,\ l\geq 1.
\end{cases}
\end{equation}
This shows that $\theta$ has extension for $\alpha^2-\beta=0$ as well by the same analytical expression
as in \eqref{e:2odesolseries}. It remains to show that, for $\alpha^2-\beta=0$ this extension coincides with
the analytical expression provided by \eqref{e:2odesoldisnula}. To see this, we use the fact that the power
series expansion of the exponential function is uniformly convergent on any compact set and hence, when
letting $\alpha^2-\beta\ra 0$ in \eqref{e:2odesolseries} we get
\begin{align}
\lim_{\alpha^2-\beta\ra 0}\!y(t)\!= & 
\frac{\gamma}{\alpha^2}-\frac{dp}{\alpha^2}\bigl[H(t-m)-H(t-n)\bigr] \\
& \
+\emath^{-\alpha t}\biggl[ 
(\theta_0\!-\!\frac{\gamma}{\alpha^2})\!+\!\bigl(v\!-\!(\frac{\gamma}{\alpha^2}\!-\!u)\alpha\bigr)t
-ep(m\!-\!t)H(m\!-\!t)\emath^{m\alpha}\!+\!ep(n\!-\!t)H(n\!-\!t)\emath^{n\alpha}
\biggl] .\nonumber 
\end{align}

Finally, from \eqref{e:2odesolseries} and taking into account once more that the power series expansion
of the exponential function is uniformly continuous on any compact set, it follows that
the solution $y$ is uniformly
continuous on compact subsets with respect to the parameters $b,d,e,g,m,n>0$, $m<n$, 
$a,c,\theta_0,\theta_0^\prime\in\mathbb{R}$
and $t\in\RR_+\setminus\{m,n\}$.
\end{proof}

Theorem~\ref{t:robust} says that the analytical form provided at \eqref{e:2odesola} is robust, in particular, the limit when
$\alpha^2-\beta\ra0$ is the function $\theta$ provided at \eqref{e:2odesoldisnula}.
In addition, if $\alpha^2-\beta<0$ then, letting $\omega=\sqrt{\beta-\alpha^2}$, the solution $\theta$ defined
as in \eqref{e:2odesola} has the representation \eqref{e:2odesolcoma}.

\subsection{The General Model.}\label{ss:gm}
We obtain the general model of the evolution of certainty, in the presence of a forcing factor, by aggregating
the subjective certainty and the objective certainty. The parameters of the objective certainty will
be denoted $a_1,b_1,c_1,d_1,\theta_{0,1}$ and hence the objective certainty $\theta_1$, see 
\eqref{e:thetaunuar},
\begin{align}
\theta_1(t) & = \frac{a_1-c_1}{b_1+d_1}- \frac{d_1 p}{b_1+d_1} \bigl(\heaviside(t - m) 
- \heaviside(t-n)\bigr) \label{e:thetaunuarunu}\\
& \ \ \ -\emath^{-t(b_1+d_1)} \biggl[\bigl(\frac{a_1-c_1}{b_1+d_1}-\theta_{0,1}\bigr) - \frac{d_1p}{b_1+d_1} 
\biggl(\heaviside(t - m) \emath^{(b_1 + d_1) m} 
-  \heaviside(t - n) \emath^{(b_1  + d_1) n}\biggr)\biggr]. \nonumber
 \end{align}
In order to obtain the general model for the evolution of certainty, 
we aggregate the subjective certainty, as provided by the 
formula \eqref{e:2odesola} in such a way that the objective certainty, as provided by \eqref{e:thetaunuar}, 
plays the role of the coupling function. For consistency reasons, we first introduce the following notation:
\begin{equation}
\beta_1=b_1+d_1,\quad \gamma_1=a_1-c_1.
\end{equation}
Then, with notation as in Subsection~\ref{ss:mmsc}, the general model of the evolution function $\theta$
of certainty is: if $\alpha^2-\beta\neq 0$ then
\begin{align}
\theta(t) & =
\frac{\gamma_1}{\beta_1}- \frac{d_1 p}{\beta_1} \bigl(\heaviside(t - m) 
- \heaviside(t-n)\bigr)
 -\emath^{-t\beta_1} \biggl[\bigl(\frac{\gamma_1}{\beta_1}-\theta_{0,1}\bigr) - \frac{d_1p}{\beta_1} 
\biggl(\heaviside(t - m) \emath^{\beta_1 m} 
-  \heaviside(t - n) \emath^{\beta_1 n}\biggr)\biggr] \label{e:thetagenerala}\\
& \ \  +\! \frac{\emath^{-\alpha t}}{2\sqrt{\alpha^2\!-\!\beta}}
\biggl[ \bigl((\theta_0\!-\!\frac{\gamma}{\beta})(\alpha\!+\!\sqrt{\alpha^2\!-\!\beta})+\theta_0^\prime\bigr)  
\emath^{t\sqrt{\alpha^2\!-\!\beta}} \nonumber 
+ \bigl((\theta_0\!-\!\frac{\gamma}{\beta})(-\alpha\!+\!\sqrt{\alpha^2\!-\!\beta})\!-\!\theta_0^\prime\bigr)
\emath^{-t\sqrt{\alpha^2\!-\!\beta}}\biggr]\nonumber \\
& \ \ +\! \frac{ep\emath^{-\alpha t}}{2\sqrt{\alpha^2\!-\!\beta}}\biggl[H(t\!-\!m)\emath^{m\alpha}
\bigl(\emath^{(t-m)\sqrt{\alpha^2\!-\!\beta}}\!-\!\emath^{(m-t)\sqrt{\alpha^2\!-\!\beta}}\bigr)
 - H(t\!-\!n)\emath^{n\alpha}
\bigl(\emath^{(t-n)\sqrt{\alpha^2\!-\!\beta}}\!-\!\emath^{(n-t)\sqrt{\alpha^2\!-\!\beta}}\bigr)\biggr],\nonumber
\end{align}
and, if $\alpha^2-\beta=0$ then
\begin{align}
\theta(t) & =
\frac{\gamma_1}{\beta_1}- \frac{d_1 p}{\beta_1} \bigl(\heaviside(t - m) 
- \heaviside(t-n)\bigr)
 -\emath^{-t\beta_1} \biggl[\bigl(\frac{\gamma_1}{\beta_1}-\theta_{0,1}\bigr) - \frac{d_1p}{\beta_1} 
\biggl(\heaviside(t - m) \emath^{\beta_1 m} 
-  \heaviside(t - n) \emath^{\beta_1 n}\biggr)\biggr] \label{e:thetageneraladisnul}\\
& \phantom{\frac{\gamma}{\alpha^2}+}
+\emath^{-\alpha t}\biggl[ 
(\theta_0-\frac{\gamma}{\alpha^2})+\bigl(\theta_0^\prime-(\frac{\gamma}{\alpha^2}-\theta_0)\alpha\bigr)t
-ep(m-t)H(m-t)\emath^{m\alpha}+ep(n-t)H(n-t)\emath^{n\alpha}
\biggl] .\nonumber
\end{align}

We also point out that, in case $\alpha^2-\beta<0$, letting $\omega=\sqrt{\beta-\alpha^2}$ then the formula
in \eqref{e:thetagenerala} has the equivalent representation
\begin{align}
\theta(t) & =
\frac{\gamma_1}{\beta_1}- \frac{d_1 p}{\beta_1} \bigl(\heaviside(t - m) 
- \heaviside(t-n)\bigr)
 -\emath^{-t\beta_1} \biggl[\bigl(\frac{\gamma_1}{\beta_1}-\theta_{0,1}\bigr) - \frac{d_1p}{\beta_1} 
\biggl(\heaviside(t - m) \emath^{\beta_1 m} 
-  \heaviside(t - n) \emath^{\beta_1 n}\biggr)\biggr] \label{e:thetageneraladidneg}\\
& \ \ \ +\emath^{-\alpha t} \times \nonumber \\
& \ \ \ \ \times\! \biggl[\! \biggl( (\theta_0 -\frac{\gamma}{\beta})\!-\!\frac{ep}{\omega}\bigl[ \emath^{m\omega}
\sin(m\omega) H(t-m)-\emath^{n\omega}\sin(n\omega)H(t-n)\bigr]\!\biggr)\!\cos(\omega t) \nonumber \\
& \ \ \ \ \ \ \ \ +\biggl(\frac{\theta_0^\prime +\alpha(\theta_0 -\frac{\gamma}{\beta})}{\omega}  
+\frac{ep}{\omega}\bigl[ \emath^{m\omega}\cos(m\omega)H(t-m)-\emath^{n\omega}
\cos(n\omega)H(t-n)\bigr]\biggr)\sin(\omega t)\biggr].\nonumber
\end{align}

Theorem~\ref{t:robust} says the analytical expression in \eqref{e:thetagenerala}
is robust and can be used for computer simulation and approximation by the method of least squares of 
experimental data. Here we make some computer simulation by using the general formula
\eqref{e:thetagenerala}. We first point out the effect of the perturbation function $f$ on the evolution of certainty
when the parameter $\alpha$, that has the interpretation of sensitivity to the rate of change of certainty, varies 
with respect to the bifurcation that is controlled by the sign of the discriminant $\alpha^2-\beta$, see 
Figure~\ref{f:simulation3d}. The perturbation occurs in the time interval $[5,10]$ and we observe the
change of the evolution of certainty as a function of time for all possible values of $\alpha$ and all possible
signs of the discriminant $\alpha^2-\beta$, although this perturbation yields changes of different shapes. The
bifurcation occurs at $\alpha=\sqrt{0.5}=0.707106781186544\ldots$ which explains the ridge in the 
neighbourhood of that value.

\begin{figure}[h]
\includegraphics[width=12cm]{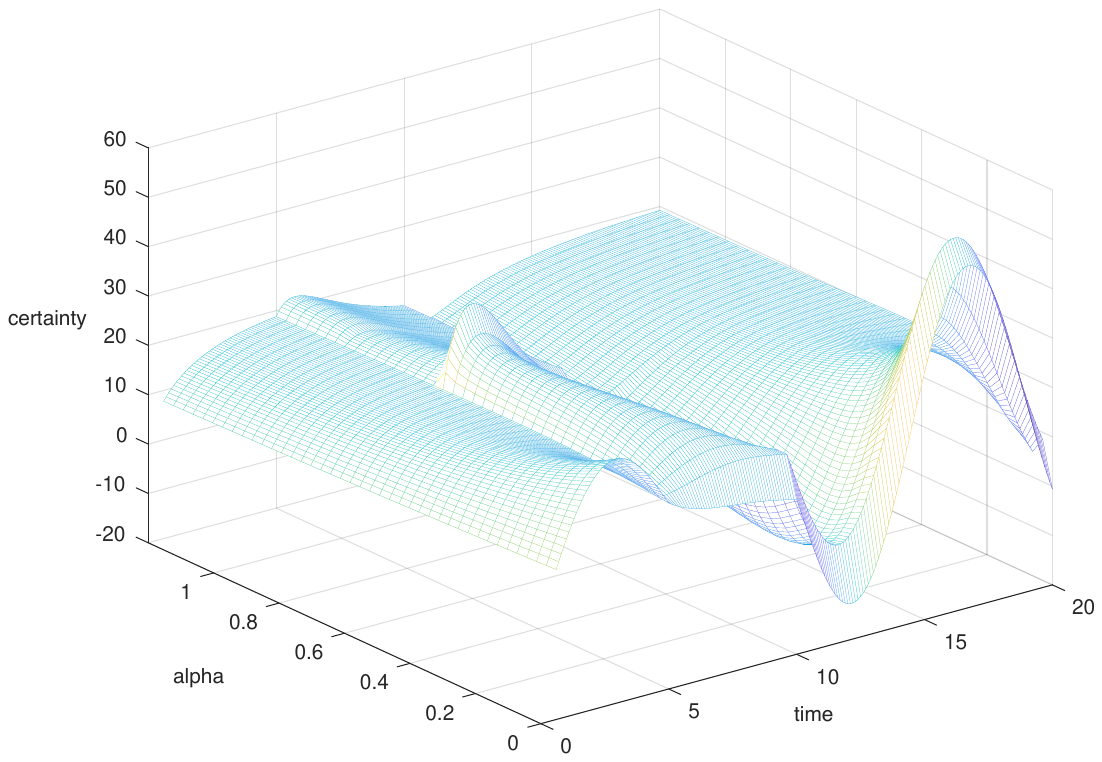}
\caption{In this simulation, we use the parameters, with respect to the notation as in \eqref{e:thetagenerala},
as follows: $\beta=0.5$, $\gamma=2.02$, $\theta_0=0$, $\theta_0^\prime=-2.2$, $\beta_1=0.5$, 
$\gamma_1=10$, $\theta_{0,1}=5$, $d_1=0.25$, $e=41$, $m=5$, $n=10$, $p=20$, while the parameter 
$\alpha$ varies in the interval $[0,1.2]$ with the sampling step $0.03$.}
\label{f:simulation3d}
\end{figure}

To be more precise, in Figure~\ref{f:sections} we depict the evolution of certainty under the perturbation of the 
function $f$ in three relevant cases: when $\alpha^2-\beta<0$ and we have a permanent oscillation of
frequency $\omega\neq 0$, and when $\alpha^2-\beta=0$ or $\alpha^2-\beta>0$ and we have no permanent 
oscillation, by emphasising the effect of the perturbation. Compared to Figure~\ref{f:simulation3d}, the three 
graphs are sections of the mesh for the corresponding values of $\alpha$, in continuous line for $0.03$,
in dashed line for $0.69$, and in dotted line for $1.2$.
In this simulation, the perturbation occurs in
the time interval $[5,10]$ when the evolution of certainty is significantly affected. We observe that, the
effect of the perturbation has a bigger amplitude in the case of negative or close to zero discriminant and 
relatively small in the case of positive discriminant. In all three cases, it needs some time to fade away
the effect of this perturbation. The value $\alpha=0.69$ is very close to the
bifurcation that appears at $\alpha^2-\beta=0$, in our case $\beta=0.5$ and 
$\alpha=0.707106781186544\ldots$, which explains the shape of the dashed line in the interval $[5,10]$, due
to the singularity in the neighbourhood of the bifurcation.

\begin{figure}[h]
\includegraphics[width=9cm]{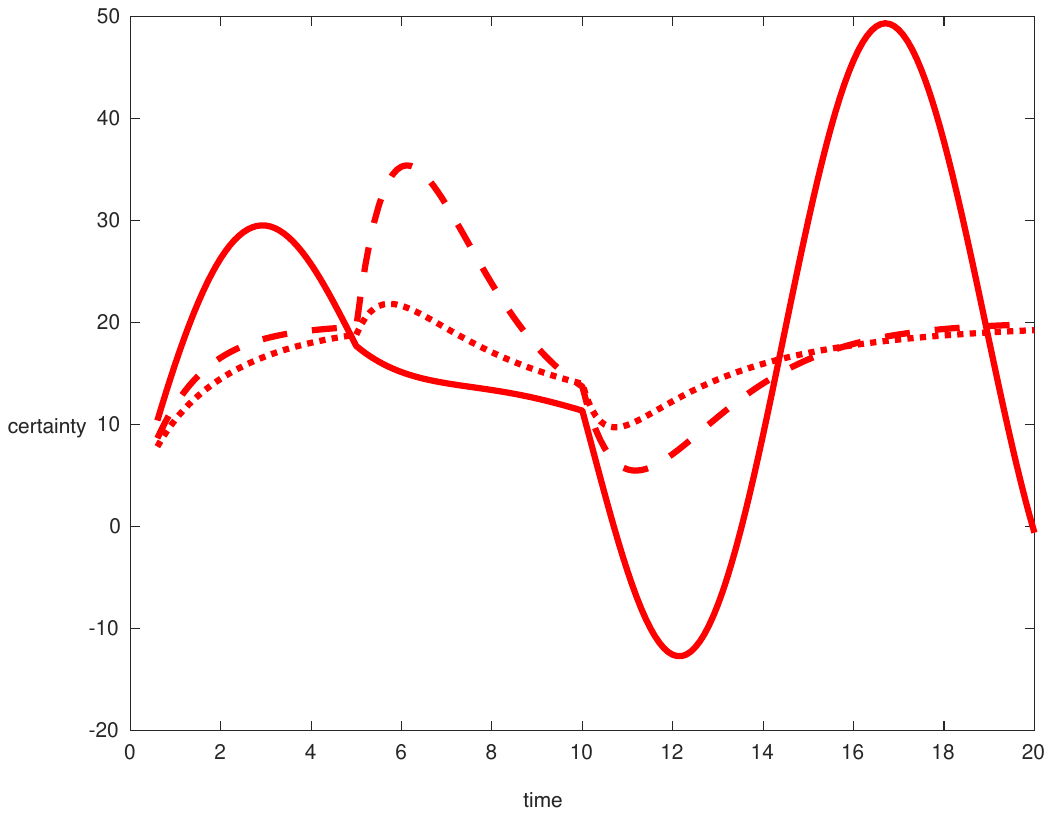}
\caption{In this simulation, we have three sections of the mesh from Figure~\ref{f:simulation3d}, corresponding
to $\alpha=0.03$, the graph in continuous line, to $\alpha=0.69$, the graph in dashed line, and to 
$\alpha=1.2$, the graph in dotted line.}
\label{f:sections}
\end{figure}

\section{Data Analysis}\label{s:da}

The purpose of the data analysis is to test whether the model can identify a forcing factor in the EoC for the participants in a group decision-making task aimed to reach consensus, on the same data as those given by Gheondea-Eladi (2016), where the group discussion is assumed to not exert a forcing factor. In this section we will provide the methodological description of the data analysis employed to this end. The analysis to be presented aims to test whether the model identified in section~\ref{ss:gm} differentiates between the cases where a forcing factor appears and those where it does not. To this end the data analysis will check: 1) whether there is a forcing factor (visible through oscillatory behaviour); 2) when does the forcing begin and end within the group discussion; 3) whether the equilibrium level of the certainty compared to the case in which a forcing factor was not modelled increases or decreases. Future research should document clearly forced and clearly non-forced group discussions in order to test and later on categorize the depth of the differentiation provided by this model.

\subsection{Data collection}
Ninety-seven students of sociology and social work, were asked to complete the NASA  moon survival task (Teleometrics International, 2007) in groups of 4-6 people. 

\emph{You are a member of a crew scheduled to meet the mother-ship on the bright face of the Moon. Due to some malfunction your ship was forced to land 322 kilometers away from the meeting point. During the landing procedure, most of the equipment on board was destroyed. Since your survival depends upon reaching the mother-ship, you have to choose the most important items available in order to walk the  distance to the mother-ship. There are 15 items left intact after the impact. Your mission is to order them according to their importance such that your crew will reach the meeting point. Number with 1 the most important item and with 10 the least important item.
\begin{itemize}
\item[–] Match box
\item[–] Condensed food
\item[–] 15 meters of nylon thread
\item[–] Parachute silk
\item[–] Portable heating unit
\item[–] Two 45 caliber pistols
\item[–] 1 box of condensed milk
\item[–] 2 oxygen tanks of 45 kg each
\item[–] Stellar map (around the moon)
\item[–] Automatic inflation rescue vest
\item[–] Magnetic compass
\item[–] 19 liters of water
\item[–] Signaling missiles
\item[–] First aid kit including needles and a syringe
\item[–] Radio emitter and transmitter with solar batteries
\end{itemize}}
All students have been verbally informed about the content of the experiment and that their participation is voluntary, that they may choose to leave the experiment at any time with no explanation and that all information they provide is annonymous. Students chose nicknames for themselves on the answering sheets and no recording between their names and nicknames has been collected. Those who participated were invited for educational purposes and did not receive any kind of course credit or reward. Each individual in the group first indicated their own solution, as an individual hierarchy and its confidence level and then discussed it with the 
group in order to achieve consensus on a group hierachy. Participants were also asked to provide the individual certainty level for the group solution.  During the group discussion, each participant wrote when each perceived change in the individual solution occured, as follows: the new solutions, the time at which each new solution occurred, the confidence level of the new solution itself and the confidence of the new solution compared to the initial individual one. Participants were asked to report these changes as soon as they perceive them. After the group discussion had reached a consensus or when consensus was considered to be impossible, each participant was asked to fill out the following information: 
individually write the group solution, write how certain she/he was about the group decision, write how satisfied they were with the way the decision was made, and how interesting they found the decision problem. 

Participants' experimental output comprised of two time series, one corresponding to the confidence of the new 
decision with respect to the initial individual decision (the reference point salient confidence measurement, called CII) and the other corresponding to the confidence of the new solution itself (the common confidence measurement, called CIF). This double measurement choice was undertaken in order to solve the following issue: participants in the experimental pretest reported the confidence evaluations with different reference points (for example, some reported the confidence level of the third observed change in comparison to the second observed change and the fourth in comparison to the third, while others were not fully aware which reference point they used). To solve this, we adapted the procedure proposed by Hoch (1987), cited in~(Stanovich and West, 1998) for the "certainty effect" and used the CII and CIF to derive, by subtraction, the \emph{absolute individual certainty} (abreviated CI). This procedure is explained in detail in Section 2.3 
of~(Gheondea-Eladi, 2016).  At each moment in time during the group discussion there were two direct measures of confidence: 1) CII was measured through the following question: "How certain are you (now) of the initial hierarchy?"; 2) CIF was measured through the question: "How certain are you (now) of the new hierarchy?". Both CII and CIF are measured repeatedly during the group discussion.  The differences between the two measures were verbally explained and exemplified in the experimental instructions though the following explanation: 1) for CII: at this time and after the discussion you had so far, how certain are you of the initial hierarchy; 2) for CIF: at this time and after the discussion you had so far, how certain are you of the new hierarchy you have just indicated. Based upon post-experimental discussions with the participants  two other experimental tasks and phrasings were tested for these questions before reaching the final form.

%\begin{figure}[thb]\label{f:DRM}
%\includegraphics[width=10cm]{C19New2.pdf}
%\caption{Participant 1.} {Data for CI (points), CII (circle), \c si CIF (star) and the theoretical model curve fit for each.} \label{top}
%\end{figure}
%
%\begin{figure}[h]\label{f:C}
%\includegraphics[width=10cm]{RM12New2.pdf}
%\caption{Participant 2.} {Data for CI (points), CII (circle), \c si CIF (star) and the theoretical model curve fit for each} \label{top}
%\end{figure}

For the model to be tested it was important that the communication in the group discussion could be approximated by time, and consequently required a single information unit to be communicated at a time. The current sample includes participants who indicated one change at a time and those who indicated more than one change at a time only once or twice. In the latter case, the reported multiple changes have been removed from the time series without much loss of information. In contrast to the previous papers on this topic (Gheondea-Eladi, 2016; Gheondea-Eladi and Gheondea, 2022), we have also discarded the instances in the time series where one of the measurements (CII or CIF) had missing data, thus leaving the size of the time series smaller than in the previous case (from a maximum of $N=20$ to $15$). Since in the previous paper, the general model included seven coefficients and the present one included thirteen, three of the participants' data were not enough for the Levenberg-Marquardt algorithm to handle bound constrainsts and for the trust-region-reflective algorithm to have at least as many equations as variables. As a consequence, the sample includes 57 participants.

% I forgot to analyse the data from 3 participants...In the second paper the sample is 60 and here 57. I only realised now...
%However, 21 of the participants who indicated more than one change at a time reported this only once or twice. In such cases, without much loss of information, these instances could be removed from the time series, leaving an individual time series sample of 60 individuals in the second paper \cite(Gheondea-Eladi2022}.  This facilitated the analysis of all measurements (CII, CIF and CI) in a single instance, as opposed to separately, as performed previously, where the algorithm required the same length of the time series for all the three measurements.

\subsection{Numerical Procedure}
\label{np}
Each participant yielded a time series, represented by two sequences of data:

\begin{equation}
xdata=(x_1, x_2, x_3,..., x_N),\quad ydata=(y_1, y_2, y_3, ..., y_N),
\end{equation}
where $N$ is the time series size, $x_j$ is the time sequence expressed in seconds, and $y_j$ is the reported level of certainty, corresponding to time $x_j$. The size of the time series, $N$ is between $11$ and $16$ and $y_j$ has values between $-7$ and $7$ for CI and between $0$ and $7$ for CII and CIF.

The least squares approximation method was employed, under the  conditions required for time series 
(i.e.\ error is spherical according to~(Beck2007a) 
by using \emph{lsqcurvefit} in \textsc{MATLAB}. As in the
previous analysis (Gheondea-Eladi. 2016), the Euclidean distance between the general model in \eqref{e:thetagenerala} 
and the data series $\mathbb{R}^N $ is minimised. 

The algorithm (available in the Supplementary Material) searches for a vector $c=(c_j)_{j=1}^{13}$, within a confidence 
interval $[c_j^\mathrm{min}, c_j^\mathrm{max}]$, for each $j=1,\ldots,13$. 

\subsection{Data Analysis Procedure}
\label{da:p}
The data analysis procedure is designed to check: 
1) whether there is a forcing factor (visible through oscillatory behaviour); 2) when does the forcing begin and end within the group discussion; 3) whether there is an increase or a decrease in the equilibrium level of the certainty compared to the case in which a forcing factor was not modelled.

In the model identified in Section~\ref{ss:gm} the value of $p$ indicates whether there is a forcing factor or not. If $p$ is big enough, then the forcing factor can be identified. If $p$ is small, then the forcing factor either did not have an influence or there was no forcing factor. In the same model, we can understand at what moment in time the forcing factor began by looking at the value of $n$ and when it has ended (if at all), by looking at the value of $m$. 
In order to check if the equilibrium value of the certainty (the certainty after the group discussion) is different or the same as when there is no forcing factor, we will analyse the modeled equilibrium values and those from the original paper of (Gheondea-Eladi, 2016), using the same data. In the analysis performed here we will see that in many cases the predicted equilibrium value of the certainty is smaller than in the case when there was no forcing factor taken into consideration. The theoretical reasons amenable to this state of affairs have been reviewed in the Introduction.

%Observam ca daca exista factorul de fotare, atunci certitudinea se echilibreaza la o valoare mai mica decat valuarea normala (daca ar fi neperturbat) (din articolul trecut). Sa scot valoarea de echilibru pentru fiecare participant in parte din articolul trecut. 

%m si n sunt punctele de timp in care incepe si respectiv se termina influenta factorului de fortare. M din date este intre 20-30 de minute, dar asta coincide cu momentul in care participantii incep sa dea datele.

 %Distanta solutiei spune cat de departe e solutia gasita de algoritmul lsqfit fata de shooting data. Eroarea relativa spune cat de bine aproximam prin curba calculata. Daca eroare relativa este mai aproape de 0, atunci curba teoretica trece exact prin punctul experimental. 

\subsection{Empirical Results}
The data analysis procedure is designed to check: 1) whether 
there is a forcing factor (visible through oscillatory behaviour); 2) when does the forcing begin 
and end within the group discussion; 3) whether there is an increase or a decrease in the 
equilibrium level of the confidence compared to the case in which a forcing factor was not 
modelled. 

In the general model, the value of $p$ indicates whether there is a forcing factor or 
not. If $p$ is big enough, then the forcing factor can be identified. If $p$ is small, then the forcing 
factor either did not have an influence or there was no forcing factor. Further research should establish lower and upper boundaries in the values of $p$, in order to be able to differentiate between the influence of different forcing factors. In the same model, we can 
understand at what moment in time the forcing factor began by looking at the value of $n$ and 
when it has ended (if at all), by looking at the value of $m$. In order to check if the equilibrium 
value of the confidence (the confidence after the group discussion) is different or the same as when 
there is no forcing factor, we will analyse the modelled equilibrium values and those from the 
original paper of (1), using the same data.

\subsubsection{Is there an observable forcing factor?}
Visible oscillations (for $p>1$) occur in 39 of 57 cases. In Figure~\ref{da:f2}, A, we provide a histogram of the values of $p$, which reflects the amplitude of the oscillation of the forcing factor, for the confidence measured as CI (see the Experimental Design section). Reference values of $p$ may be established for different forcing factors (such as when a participant artificially increases the confidence of another by boosting self-confidence, when the decision is deferred to an authoritative figure or when there is a motivational movie running for the group, and so on). Note that the current experiment has been constructed with instruction to reach consensus and no forcing factor manipulation has been applied to the group. Thus it serves as the base rate for future experiments aimed at estimating model parameters for specific types of forcing factors. 
In evaluating the coefficients $p, m, n$ we assess the quality of the approximation algorithm by minimizing the relative error (see Table~\ref{da:ff}) and the distance to the solution. The latter reflects how much the algorithm improved our initial guess over the coefficients and it should only be viewed as a measure regarding the algorithms’ initial conditions. 

\begin{table}
\caption{Measures of central tendency of relevant model coefficients: $p$ = oscillation coefficient; $m$ = starting point of forcing factor; $n$ = end point of forcing factor.}
\label{da:ff}
\begin{tabular}{c|lllll|ll}

& Mean		&		Median		 &	Std.Dev. & Min. & Max & Unit	\\	

\hline
Relative error & 0.645	& 0.629	& 0.443 &0.004 & 3.749 & \\
Distance to the solution & 3.969 &	3.999 &	1.914 & 0.083 & 9.211 & \\
$p$ & 1.949	& 1.306 &	1.826 & & & \\
$m$ & 29.957 &	29.928 & 	8.915 & &  & 	seconds\\
$n$ & 100 &	100 &	0	& & & seconds \\
\hline

\end{tabular}
\end{table}

%\begin{figure}\label{da:p1}
%\includegraphics[width=5cm]{P_forCII.png}
%\caption{Histogram of the oscillation coefficient, $p$ for certainty measured as CII.}
%\end{figure}
%
%\begin{figure}\label{da:p2}
%\includegraphics[width=5cm]{P_forCIF.png}
%\caption{Histogram of the oscillation coefficient, $p$, for certainty measured as CIF.}
%\end{figure}

\begin{figure}
\includegraphics[width=13cm]{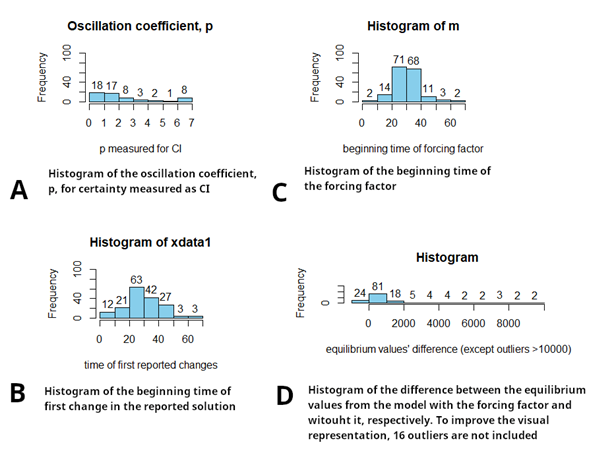}
\caption{Results.}\label{da:f2}
\end{figure}

%\begin{figure}\label{da:p3}
%\includegraphics[width=5cm]{P_forCI.png}
%\caption{Histogram of the oscillation coefficient, $p$ for certainty measured as CI.}
%\end{figure}

\subsubsection{When does the forcing factor begin and end?}
Irrespective of how the confidence was measured, the forcing factor ends at the end of the discussion (n=100) and it begins  approximately between 20 and 40 minutes after the group discussion begins for 139 of the measurements (approx. 81\% of $57 \times 3 = 171$ measurements) (see $m$ in Table 1 and Figure~\ref{da:f2}, C). Around this time, participants also begin to report changes in their initial solution (Figure~\ref{da:f2}, B). 

%\begin{figure}\label{da:m}
%\includegraphics[width=5cm]{m.png}
%\caption{Histogram of the begining time of forcing factor.}
%\end{figure}
%
%\begin{figure}\label{da:xdata1}
%\includegraphics[width=5cm]{xdata1.png}
%\caption{Histogram of the begining time of first change in the reported solution.}
%\end{figure}

\subsubsection{What happens to the equilibrium value of the confidence when a forcing factor appears?}

Here we compare the equilibrium values predicted by the model without a forcing factor (based on the values presented in (Gheondea-Eladi and Gheondea, 2022) and those predicted by the model with a forcing factor, given in eq. (1). For the model which includes the forcing factor, \eqref{e:thetageneraladidneg}, the equilibrium value is given by the values of $\frac{\gamma_1}{\beta_1}- \frac{d_1 p}{\beta_1}$, when the Heaviside function is constant and equal to $1$. In the database, available in the Supplementary Material, these correspond to coefficients $\gamma_1=c_7, \beta_1=c_7, d_1=c_9$ and $p=c_{13}$ (see Section~\ref{np}). For the model without the forcing factor, based on equation (17) from (Gheondea-Eladi and Gheondea, 2022), the equilibrium value is given by the values of $\frac{\gamma_1}{\beta_1}$ and in the database corresponding to this paper, these are $\gamma_1=c_7$ and $\beta_1=c_6$. 

To do this, we employ a subset of participants common to both manipulations. We find that the equilibrium value for the model with the forcing factor is lower than that without it: $24/165$, ($85.45\%$ of measurements) lead to such cases, (Figure~\ref{da:f2}, D). We consider that this is one of the main results of this paper. Generally, this situation may be justified by cases in which the decision is delegated to some extent. 

Methodologically, in order to compare the equilibrium value of the confidence in the model with the forcing factor and that of the model without it, we perform the following steps:
\begin{enumerate}
\item[1.] We compare the equilibrium values from the two models (with and without the forcing 
factor), by looking at the distribution of differences between the equilibrium values 
(EQV) of the two models and observing the cases when the difference between the EQV 
with the forcing factor and the EQV without it, is negative.
\item[2.] We perfom a robustness check for the two algorithms, which means that we look at whether the coefficients obtained through the two algorithms are similar. We find that any differences observed are likely to be due to error (sig.$=0.810$ for $\beta_1$ and sig.$=0.201$ for $\gamma_1$, paired samples t-tests). 

\end{enumerate}

%
%
%\begin{table}
%\caption{Equilibrium values based on the model with and without a forcing factor}
%\label{da:eqval}
%\begin{tabular}{c|lll}
%
%Model & 	Mean	&		Median		 &		Std. Deviation	\\	
%\hline
%Without forcing factor (all measures)  &481.533	 & 321.105	 &	467.733 \\
%Without forcing factor (CI) & 186.198 & 0.8561 & 323.9304 \\
%\hline
%With forcing factor (all measures) &	-6716.0  & 	-10.8 &	34833.71  \\
%With forcing factor (CI) & -1803.193 & -12.006 & 5109.577 \\
%\hline
%
%\end{tabular}
%\end{table}

\section{Conclusions}
In this article, we obtained two notable results. First, the confidence reflected by the model with the forcing factor has a lower equilibrium value than in the model without. Second, we show that, when a forcing factor appears, it can be expected to induce additional oscillations of the confidence, but these oscillations are not damped. 

Prolonged oscillation of the certainty level can be difficult, unpleasant or costly (Zamfir, 2005), but so could the prospect of acting with lower confidence (Punchochar and Fox, 2004).  Given the interpretations of a forcing factor as either boosts of self-confidence, deferral of decision or other kinds of influences which affect confidence that are not related to information (see Introduction), this result raises a fundamental question for decision coaching and group decision-making practice: under which conditions is it useful to make a decision with highest degree of confidence and when not? And also: how high (by allowing the confidence to oscillate during the process) or low (by, for example, deferring it to a certain principle or path/idea) do we want the confidence level to be? In our model, the sensitivity to the absolute value of the confidence and the sensitivity to the rate of change (Gheondea-Eladi and Gheondea, 2022) are used to account for this. 

The second important finding of this paper indicates that we identify forced decisions through the presence of non-attenuated oscillations in the EoC during group decisions. Crucially, the presence of non-attenuated oscillations serves as an analytical marker for identifying "forced" decisions, allowing researchers to differentiate between normative information integration and artificial elevations in certainty. This versatile framework establishes a foundation for further investigation into various modes of social influence and their corresponding technological applications.

\subsection{Limitations and extensions}
	There are two main aspects to discuss for the applicability of the model. First, although this model is tested on a decision with a known solution, its applicability is not restricted to such cases. This is due to the fact that the assumptions of the mathematical model employed do not rely on the existence of a solution to the decision, nor do the empirical measurements of the confidence. Second, the model is suited for small groups, but not for large ones or networks, because it assumes that each group member is able to interact with the others. However relying on a differential equation model paves the way towards embedding it into social network models.

	There is one important limitation of this model. The results are only valid for the cases when the confidence function is a linear representation of time. This means, in practical terms that the results are valid only for those people who were able to differentiate accurately when a change has happened within a unit of time. For people who reported more than one change in the hierarchy at a given time, a univalued function of time cannot be used. For these people multivalued functions (relations) would be necessary. 

\subsection{Technical Considerations}
	An advantage of using step functions for the forcing factor (which have a finite number of discontinuities and hence a finite number of singularities and generalised functions (distributions), which have a low number of paramenters) is that it allows us to compare the EoC with a model which takes into consideration the presence of forcing factors even though the group discussion aimed at reaching consensus is said to not lead to deliberate interventions in the level of the confidence.  A disadvantage of using the Heaviside functions for modelling the perturbation is that it has singularities at the starting and ending points and hence, robustness holds only on compact intervals that avoid these points. This can be remedied if smooth approximations of the Heaviside functions will be used in the model, for example the flexible sigmoid functions, but this requires more parameters in the minimisation problem and hence a different experiment with more sampling points. This will be the subject of a follow-up of this research.
	\bigskip
	
%\textbf{Acknowledgements:}
%This research was not funded from any source.
%
%This research did not use any AI software at any stage.
%
%The authors declare no conflict of interest.
\bigskip

\setcounter{equation}{0}
\renewcommand\theequation{A.\arabic{equation}}
%%%%%%

\section*{Appendix: Proof of Theorem~\ref{t:2ode}}

\subsection*{A.1: The 2nd Order ODE with Forcing Factor.}
We consider the second order initial value problem (2nd order IVP)
\begin{align}\label{e:2ode}
y^{\prime\prime}(t) +(e+g)y^\prime(t)+ (b+d)y(t) & =(a-c)-dp(H(t-m)-H(t-n)) \\ & \phantom{(a-c)+} 
+ep(\delta(t-m)-\delta(t-n)), 
\nonumber \\ y(0)  = u, \quad & \quad
y^\prime(0) = v, \nonumber
\end{align}
under the assumption that all parameters $a,b,c,d,e,g$ are positive, while $u$ and $v$ are real.
Here $H$ denotes the Heaviside function, $\delta$ is the Dirac "function", and $0<m<n$. 
If $p=0$ we have the unperturbed 2nd order IVP and
if $p\neq 0$ we have the perturbed 2nd order IVP. The perturbation part is given by the function 
$df(t)=dp(H(t-m)-H(t-n))$, where $f(t)=p(H(t-m)-H(t-n))$ is the forcing factor.

In order to solve the perturbed linear 2nd order IVP \eqref{e:2ode} we use {\tt dsolve} program of MATLAB,
that requires 
\begin{equation}(e+g)^2-4(b+d)\neq 0, \quad \lambda_{1,2}=\frac{e+g\pm \sqrt{(e+g)^2-4(b+d)}}{2}\neq 0,
\end{equation}
and provides the solution
\begin{align}
y(t) & = \exp(-t (e/2 + g/2 -  \sqrt{(e+g)^2-4(b+d)}/2))\times \\
& \ \ \ 
\times\biggl(2 a \frac{\exp((e t)/2 + (g t)/2 - (t  \sqrt{(e+g)^2-4(b+d)})/2)}{(e + g -  \sqrt{(e+g)^2-4(b+d)})  \sqrt{(e+g)^2-4(b+d)}} \nonumber \\
& \ \ \ \ \  \ \ \ \ \ - 2 c \frac{\exp((e t)/2 + (g t)/2 - (t  \sqrt{(e+g)^2-4(b+d)})/2)}{(e + g -  \sqrt{(e+g)^2-4(b+d)})  \sqrt{(e+g)^2-4(b+d)}}  \nonumber \\
&  \ \ \ \ \  \ \ \ \ \ - e p \heaviside(t - m) \frac{\exp((e m)/2 + (g m)/2 - (m  \sqrt{(e+g)^2-4(b+d)})/2)}{ \sqrt{(e+g)^2-4(b+d)}}  \nonumber \\
&  \ \ \ \ \  \ \ \ \ \ + e p \heaviside(t - n) \frac{\exp((e n)/2 + (g n)/2 - (n  \sqrt{(e+g)^2-4(b+d)})/2)}{ \sqrt{(e+g)^2-4(b+d)}}  \nonumber \\
&  \ \ \ \ \  \ \ \ \ \ + d p \frac{\exp((e t)/2 + (g t)/2 - (t  \sqrt{(e+g)^2-4(b+d)})/2) (\sign(m - t) - 1)}{(e + g -  \sqrt{(e+g)^2-4(b+d)})  \sqrt{(e+g)^2-4(b+d)}}  \nonumber \\
&  \ \ \ \ \  \ \ \ \ \ - d p \frac{\exp((e t)/2 + (g t)/2 - (t  \sqrt{(e+g)^2-4(b+d)})/2) (\sign(n - t) - 1)}{(e + g -  \sqrt{(e+g)^2-4(b+d)})  \sqrt{(e+g)^2-4(b+d)}}\biggr)  \nonumber  \\
%%%%%%%
& \ \ \ - \exp(-t (e/2 + g/2 +  \sqrt{(e+g)^2-4(b+d)}/2)) \times  \nonumber \\
& \ \ \
\times \biggl(2 a \frac{\exp((e t)/2 + (g t)/2 + (t  \sqrt{(e+g)^2-4(b+d)})/2)}{(e + g +  \sqrt{(e+g)^2-4(b+d)})  \sqrt{(e+g)^2-4(b+d)}}  \nonumber \\
&  \ \ \ \ \  \ \ \ \ \ - 2 c \frac{\exp((e t)/2 + (g t)/2 + (t  \sqrt{(e+g)^2-4(b+d)})/2)}{(e + g +  \sqrt{(e+g)^2-4(b+d)})  \sqrt{(e+g)^2-4(b+d)}}  \nonumber \\
&  \ \ \ \ \  \ \ \ \ \ - e p \heaviside(t - m) \frac{\exp((e m)/2 + (g m)/2 + (m  \sqrt{(e+g)^2-4(b+d)})/2)}{ \sqrt{(e+g)^2-4(b+d)}}  \nonumber  \\
&  \ \ \ \ \  \ \ \ \ \ + e p \heaviside(t - n) \frac{\exp((e n)/2 + (g n)/2 + (n  \sqrt{(e+g)^2-4(b+d)})/2)}{ \sqrt{(e+g)^2-4(b+d)}}  \nonumber \\
&  \ \ \ \ \  \ \ \ \ \ + dp\frac{ \exp((e t)/2 + (g t)/2 + (t  \sqrt{(e+g)^2-4(b+d)})/2) (\sign(m - t) - 1)}{(e + g +  \sqrt{(e+g)^2-4(b+d)})  \sqrt{(e+g)^2-4(b+d)}}  \nonumber \\
&  \ \ \ \ \  \ \ \ \ \ - dp\frac{ \exp((e t)/2 + (g t)/2 + (t  \sqrt{(e+g)^2-4(b+d)})/2) (\sign(n - t) - 1)}{(e + g +  \sqrt{(e+g)^2-4(b+d)})  \sqrt{(e+g)^2-4(b+d)}}\biggr)  \nonumber \\
%%%%%%
& + \exp(-t (e/2 + g/2 -  \sqrt{(e+g)^2-4(b+d)}/2)) \times  \nonumber \\
& \times \frac{ (4 c - 4 a + 2 e v + 2 g v - u ((e+g)^2-4(b+d)) + e^2 u + g^2 u - 2 v  \sqrt{(e+g)^2-4(b+d)} + 2 e g u)}{2 (e + g -  \sqrt{(e+g)^2-4(b+d)})  \sqrt{(e+g)^2-4(b+d)}}  \nonumber  \\
%%%%%%%
& - \exp(-t (e/2 + g/2 +  \sqrt{(e+g)^2-4(b+d)}/2)) \times  \nonumber \\
& \times \frac{ (4 c - 4 a + 2 e v + 2 g v - u ((e+g)^2-4(b+d)) + e^2 u + g^2 u + 2 v  \sqrt{(e+g)^2-4(b+d)} + 2 e g u)}{2 (e + g +  \sqrt{(e+g)^2-4(b+d)})  \sqrt{(e+g)^2-4(b+d)}}  \nonumber 
\end{align}
that can be firstly arranged to
\begin{align}
y(t) & = \exp(-t \frac{e+ g-  \sqrt{(e+g)^2-4(b+d)}}{2})\times \\
& \ \ \ 
\times\biggl(2 a \frac{\exp(t\frac{e  + g  -   \sqrt{(e+g)^2-4(b+d)}}{2})}{(e + g -  \sqrt{(e+g)^2-4(b+d)})  
\sqrt{(e+g)^2-4(b+d)}}  \nonumber \\
& \ \ \ \ \  \ \ \ \ \ - 2 c \frac{\exp(t\frac{e + g  -  \sqrt{(e+g)^2-4(b+d)}}{2})}{(e + g -  \sqrt{(e+g)^2-4(b+d)})  
\sqrt{(e+g)^2-4(b+d)}}  \nonumber \\
&  \ \ \ \ \  \ \ \ \ \ - e p \heaviside(t - m) \frac{\exp(m\frac{e+g -\sqrt{(e+g)^2-4(b+d)}}{2})}{ \sqrt{(e+g)^2-4(b+d)}} 
\nonumber \\
& \phantom{\ \ \ \ \ \ \ H(t-m)}+ e p \heaviside(t - n) \frac{\exp(n\frac{ e+g-\sqrt{(e+g)^2-4(b+d)}}{2})}{ \sqrt{(e+g)^2-4(b+d)}}  \nonumber \\
&  \ \ \ \ \  \ \ \ \ \ + d p \frac{\exp(t\frac{e+g-\sqrt{(e+g)^2-4(b+d)}}{2}) (\sign(m - t) - 1)}{(e + g -  \sqrt{(e+g)^2-4(b+d)})  \sqrt{(e+g)^2-4(b+d)}}  \nonumber \\
&  \ \ \ \ \  \ \ \ \ \ - d p \frac{\exp(t\frac{e+g-\sqrt{(e+g)^2-4(b+d)}}{2}) (\sign(n - t) - 1)}{(e + g -  \sqrt{(e+g)^2-4(b+d)})  \sqrt{(e+g)^2-4(b+d)}}\biggr)  \nonumber \\
%%%%%%%
& \ \ \ - \exp(-t \frac{e+g+\sqrt{(e+g)^2-4(b+d)}}{2}) \times  \nonumber \\
& \ \ \
\times \biggl(2 a \frac{\exp(t\frac{e+g+\sqrt{(e+g)^2-4(b+d)}}{2})}{(e + g +  \sqrt{(e+g)^2-4(b+d)})  \sqrt{(e+g)^2-4(b+d)}}  \nonumber \\
&  \ \ \ \ \  \ \ \ \ \ - 2 c \frac{\exp(t\frac{e+g+\sqrt{(e+g)^2-4(b+d)}}{2})}{(e + g +  \sqrt{(e+g)^2-4(b+d)})  \sqrt{(e+g)^2-4(b+d)}}  \nonumber \\
&  \ \ \ \ \  \ \ \ \ \ - e p \heaviside(t - m) \frac{\exp(m\frac{e+g+\sqrt{(e+g)^2-4(b+d)}}{2})}{ \sqrt{(e+g)^2-4(b+d)}}
\nonumber \\
& \phantom{\ \ \ \ \ \ \ H(t-m)}+ e p \heaviside(t - n) \frac{\exp(n\frac{e+g+\sqrt{(e+g)^2-4(b+d)}}{2})}{ \sqrt{(e+g)^2-4(b+d)}}  \nonumber \\
&  \ \ \ \ \  \ \ \ \ \ + dp \frac{\exp(t\frac{e+g+\sqrt{(e+g)^2-4(b+d)}}{2}) (\sign(m - t) - 1)}{(e + g +  \sqrt{(e+g)^2-4(b+d)})  \sqrt{(e+g)^2-4(b+d)}}  \nonumber  \\
&  \ \ \ \ \  \ \ \ \ \ - dp\frac{\exp(t\frac{e+g+\sqrt{(e+g)^2-4(b+d)}}{2}) (\sign(n - t) - 1)}{(e + g +  \sqrt{(e+g)^2-4(b+d)})  \sqrt{(e+g)^2-4(b+d)}}\biggr) \nonumber \\
%%%%%%
&  \ \ \ \ + \exp(-t \frac{e+g-\sqrt{(e+g)^2-4(b+d)}}{2}) \times  \nonumber \\
&  \ \ \ \ \ \ \ \ \times \frac{ 4 c - 4 a + 2 e v + 2 g v +4u(b+d) - 2 v  \sqrt{(e+g)^2-4(b+d)}}{2 (e + g -  \sqrt{(e+g)^2-4(b+d)})  \sqrt{(e+g)^2-4(b+d)}}  \nonumber \\
%%%%%%%
&  \ \ \ \ - \exp(-t \frac{e+g+\sqrt{(e+g)^2-4(b+d)}}{2}) \times  \nonumber \\
&  \ \ \ \ \ \ \ \ \times \frac{ 4 c - 4 a + 2 e v + 2 g v +4u(b+d)+ 2 v  \sqrt{(e+g)^2-4(b+d)}}{2 (e + g +  \sqrt{(e+g)^2-4(b+d)})  \sqrt{(e+g)^2-4(b+d)}}  \nonumber 
\end{align}
and then, introducing the exponential factors inside the big parentheses, we get
\begin{align}
y(t) & = \frac{2(a-c)}{(e + g -  \sqrt{(e+g)^2-4(b+d)})  
\sqrt{(e+g)^2-4(b+d)}} \\
& \ \ \ +dp \frac{\sign(m-t)-\sign(n-t)}{(e + g -  \sqrt{(e+g)^2-4(b+d)})  
\sqrt{(e+g)^2-4(b+d)}}  \nonumber \\
&  \ \ \ - e p \heaviside(t - m) \frac{\exp((m-t)\frac{e+g -\sqrt{(e+g)^2-4(b+d)}}{2})}{ \sqrt{(e+g)^2-4(b+d)}} 
\nonumber \\
& \phantom{\ \ \ \ \ \ \ H(t-m)}
+ e p \heaviside(t - n) \frac{\exp((n-t)\frac{ e+g-\sqrt{(e+g)^2-4(b+d)}}{2})}{ \sqrt{(e+g)^2-4(b+d)}}  \nonumber \\
%%%%%%%
&
\ \ \ - \frac{2(a-c)}{(e + g +  \sqrt{(e+g)^2-4(b+d)})  \sqrt{(e+g)^2-4(b+d)}}  \nonumber \\
& \ \ \ \ \ + e p \heaviside(t - m) \frac{\exp((m-t)\frac{e+g+\sqrt{(e+g)^2-4(b+d)}}{2})}{ \sqrt{(e+g)^2-4(b+d)}}
\nonumber \\
& \phantom{\ \ \ \ \ \ \ H(t-m)}
- e p \heaviside(t - n) \frac{\exp((n-t)\frac{e+g+\sqrt{(e+g)^2-4(b+d)}}{2})}{ \sqrt{(e+g)^2-4(b+d)}}  \nonumber \\
&  \ \ \ \ \ - dp \frac{\sign(m - t) - \sign(n-t)}{(e + g +  \sqrt{(e+g)^2-4(b+d)})  \sqrt{(e+g)^2-4(b+d)}}  \nonumber \\
%%%%%%
& \ \ \ \ \ + \exp(-t \frac{e+g-\sqrt{(e+g)^2-4(b+d)}}{2}) \times  \nonumber \\
& \ \ \ \ \ \ \ \ \times \frac{ 4 c - 4 a + 2 e v + 2 g v +4u(b+d) - 2 v  \sqrt{(e+g)^2-4(b+d)}}{2 (e + g -  \sqrt{(e+g)^2-4(b+d)})  \sqrt{(e+g)^2-4(b+d)}}  \nonumber \\
%%%%%%%
& \ \ \ \ - \exp(-t \frac{e+g+\sqrt{(e+g)^2-4(b+d)}}{2}) \times  \nonumber \\
&  \ \ \ \ \ \ \ \ \times \frac{ 4 c - 4 a + 2 e v + 2 g v +4u(b+d)+ 2 v  \sqrt{(e+g)^2-4(b+d)}}{2 (e + g +  \sqrt{(e+g)^2-4(b+d)})  \sqrt{(e+g)^2-4(b+d)}} \nonumber 
\end{align}

We replace the signum function as $\sign(t)=2H(t)-1$ and hence
\begin{equation*}\sign(m-t)-\sign(n-t)=2\bigl(H(m-t)-H(n-t)\bigr)\end{equation*} 
and get
\begin{align}
y(t) & = \frac{2(a-c)}{(e + g -  \sqrt{(e+g)^2-4(b+d)})  
\sqrt{(e+g)^2-4(b+d)}} \\
& \ \ \ +dp \frac{2\bigl(H(m-t)-H(n-t)\bigr)}{(e + g -  \sqrt{(e+g)^2-4(b+d)})  
\sqrt{(e+g)^2-4(b+d)}}  \nonumber  \\
&  \ \ \ + e p \heaviside(t - m) \frac{\exp((m-t)\frac{e+g -\sqrt{(e+g)^2-4(b+d)}}{2})}{ \sqrt{(e+g)^2-4(b+d)}} 
\nonumber \\
& \phantom{\ \ \ \ \ \ \ H(t-m)}
- e p \heaviside(t - n) \frac{\exp((n-t)\frac{ e+g-\sqrt{(e+g)^2-4(b+d)}}{2})}{ \sqrt{(e+g)^2-4(b+d)}}  \nonumber \\
%%%%%%%
&
\ \ \ - \frac{2(a-c)}{(e + g +  \sqrt{(e+g)^2-4(b+d)})  \sqrt{(e+g)^2-4(b+d)}}  \nonumber \\
& \ \ \ \ \ - e p \heaviside(t - m) \frac{\exp((m-t)\frac{e+g+\sqrt{(e+g)^2-4(b+d)}}{2})}{ \sqrt{(e+g)^2-4(b+d)}}
\nonumber \\
& \phantom{\ \ \ \ \ \ \ H(t-m)}
+ e p \heaviside(t - n) \frac{\exp((n-t)\frac{e+g+\sqrt{(e+g)^2-4(b+d)}}{2})}{ \sqrt{(e+g)^2-4(b+d)}}  \nonumber \\
&  \ \ \ \ \ - dp \frac{2\bigl(H(m-t)-H(n-t)\bigr)}{(e + g + \sqrt{(e+g)^2-4(b+d)}) \sqrt{(e+g)^2-4(b+d)}} \nonumber \\
%%%%%%
& \ \ \ \ \ + \exp(-t \frac{e+g-\sqrt{(e+g)^2-4(b+d)}}{2}) \times  \nonumber \\
& \ \ \ \ \ \ \ \ \times \frac{ 4 c - 4 a + 2 e v + 2 g v +4u(b+d) - 2 v  \sqrt{(e+g)^2-4(b+d)}}{2 (e + g -  \sqrt{(e+g)^2-4(b+d)})  \sqrt{(e+g)^2-4(b+d)}}  \nonumber \\
%%%%%%%
& \ \ \ \ - \exp(-t \frac{e+g+\sqrt{(e+g)^2-4(b+d)}}{2}) \times  \nonumber \\
&  \ \ \ \ \ \ \ \ \times \frac{ 4 c - 4 a + 2 e v + 2 g v +4u(b+d)+ 2 v  \sqrt{(e+g)^2-4(b+d)}}{2 (e + g +  \sqrt{(e+g)^2-4(b+d)})  \sqrt{(e+g)^2-4(b+d)}} \nonumber 
\end{align}
Then, we change the order in which we write the terms in order to point out in the first six rows the unperturbed 
solution and in the last four rows the new terms appearing in the perturbed solution
\begin{align} %%% constanta de echilbru
y(t) & = \frac{2(a-c)}{(e + g -  \sqrt{(e+g)^2-4(b+d)})  
\sqrt{(e+g)^2-4(b+d)}}  \label{e:yeteord}\\
& \ \ \ - \frac{2(a-c)}{(e + g +  \sqrt{(e+g)^2-4(b+d)})  \sqrt{(e+g)^2-4(b+d)}} \nonumber \\ 
%%%%%% solutia neperturbata
& \ \ \ \ \ + \exp(-t \frac{e+g-\sqrt{(e+g)^2-4(b+d)}}{2}) \times  \nonumber \\
& \ \ \ \ \ \ \ \ \times \frac{ 4 c - 4 a + 2 e v + 2 g v +4u(b+d) - 2 v  \sqrt{(e+g)^2-4(b+d)}}{2 (e + g -  \sqrt{(e+g)^2-4(b+d)})  \sqrt{(e+g)^2-4(b+d)}}  \nonumber \\
& \ \ \ \ - \exp(-t \frac{e+g+\sqrt{(e+g)^2-4(b+d)}}{2}) \times  \nonumber \\
&  \ \ \ \ \ \ \ \ \times \frac{ 4 c - 4 a + 2 e v + 2 g v +4u(b+d)+ 2 v  \sqrt{(e+g)^2-4(b+d)}}{2 (e + g +  \sqrt{(e+g)^2-4(b+d)})  \sqrt{(e+g)^2-4(b+d)}} \nonumber \\
%%%%%%% termeni noi din solutia perturbata
& \ \ \ +dp \frac{2\bigl(H(m-t)-H(n-t)\bigr)}{(e + g -  \sqrt{(e+g)^2-4(b+d)})  
\sqrt{(e+g)^2-4(b+d)}}  \nonumber \\
&  \ \ \ \ \ - dp \frac{2\bigl(H(m-t)-H(n-t)\bigr)}{(e + g +  \sqrt{(e+g)^2-4(b+d)})  \sqrt{(e+g)^2-4(b+d)}} 
 \nonumber \\
&  \ \ \ + e p \heaviside(t - m) \frac{\exp((m-t)\frac{e+g -\sqrt{(e+g)^2-4(b+d)}}{2})}{ \sqrt{(e+g)^2-4(b+d)}} 
\nonumber \\
& - e p \heaviside(t - n) \frac{\exp((n-t)\frac{ e+g-\sqrt{(e+g)^2-4(b+d)}}{2})}{ \sqrt{(e+g)^2-4(b+d)}}  \nonumber \\ 
& \ \ \ \ \ - e p \heaviside(t - m) \frac{\exp((m-t)\frac{e+g+\sqrt{(e+g)^2-4(b+d)}}{2})}{ \sqrt{(e+g)^2-4(b+d)}}
\nonumber \\
& \phantom{\ \ \ \ \ \ \ H(t-m)}
+ e p \heaviside(t - n) \frac{\exp((n-t)\frac{e+g+\sqrt{(e+g)^2-4(b+d)}}{2})}{ \sqrt{(e+g)^2-4(b+d)}}  \nonumber 
\end{align}
Then observe that
\begin{align}\label{e:acab}
\frac{a-c}{b+d} & = \frac{2(a-c)}{(e + g -  \sqrt{(e+g)^2-4(b+d)})  
\sqrt{(e+g)^2-4(b+d)}} \\
& \ \ \ - \frac{2(a-c)}{(e + g +  \sqrt{(e+g)^2-4(b+d)})  \sqrt{(e+g)^2-4(b+d)}},\nonumber
\end{align}
and that
\begin{align}\label{e:depedoi}
-\frac{dp}{b+d} & \bigl(H(t-m)-H(t-n)\biggr) \\
& = dp \frac{2\bigl(H(m-t)-H(n-t)\bigr)}{(e + g -  \sqrt{(e+g)^2-4(b+d)})  
\sqrt{(e+g)^2-4(b+d)}} \nonumber \\
& \ \ \ \ \ - dp \frac{2\bigl(H(m-t)-H(n-t)\bigr)}{(e+g +\sqrt{(e+g)^2-4(b+d)})  \sqrt{(e+g)^2-4(b+d)}}.\nonumber
\end{align}
In view of \eqref{e:acab}--\eqref{e:depedoi} it follows that
\begin{align} %%%% functia de cuplare
y(t) & = \frac{a-c}{b + d}  -\frac{dp}{b+d}\bigl(H(t-m)-H(t-n)\bigr) \label{e:yetedoi}\\
%%%%%% solutia neperturbata
& \ \ \ \ \ + \exp(-t \frac{e+g-\sqrt{(e+g)^2-4(b+d)}}{2}) \times  \nonumber \\
& \ \ \ \ \ \ \ \ \times \frac{ 4 c - 4 a + 2 e v + 2 g v +4u(b+d) - 2 v  \sqrt{(e+g)^2-4(b+d)}}{2 (e + g -  \sqrt{(e+g)^2-4(b+d)})  \sqrt{(e+g)^2-4(b+d)}}  \nonumber \\
& \ \ \ \ - \exp(-t \frac{e+g+\sqrt{(e+g)^2-4(b+d)}}{2}) \times  \nonumber \\
&  \ \ \ \ \ \ \ \ \times \frac{ 4 c - 4 a + 2 e v + 2 g v +4u(b+d)+ 2 v  \sqrt{(e+g)^2-4(b+d)}}{2 (e + g +  \sqrt{(e+g)^2-4(b+d)})  \sqrt{(e+g)^2-4(b+d)}} \nonumber \\
%%%%%%% termeni noi din solutia perturbata
&  \ \ \ \ \ \ + e p \heaviside(t - m) \frac{\exp((m-t)\frac{e+g -\sqrt{(e+g)^2-4(b+d)}}{2})}{ \sqrt{(e+g)^2-4(b+d)}}
\nonumber \\
& \phantom{\ \ \ \ \ \ H(t-m)}
- e p \heaviside(t - n) \frac{\exp((n-t)\frac{ e+g-\sqrt{(e+g)^2-4(b+d)}}{2})}{ \sqrt{(e+g)^2-4(b+d)}}  \nonumber \\ 
& \ \ \ \ \ - e p \heaviside(t - m) \frac{\exp((m-t)\frac{e+g+\sqrt{(e+g)^2-4(b+d)}}{2})}{ \sqrt{(e+g)^2-4(b+d)}}
\nonumber \\
& \phantom{\ \ \ \ \ \ H(t-m)}
+ e p \heaviside(t - n) \frac{\exp((n-t)\frac{e+g+\sqrt{(e+g)^2-4(b+d)}}{2})}{ \sqrt{(e+g)^2-4(b+d)}}. \nonumber
\end{align}
Note that, since all constants $b,d,e,g$ are positive we have
\begin{equation}
\lambda_1=\frac{e+g +\sqrt{(e+g)^2-4(b+d)}}{2},\quad \lambda_2=\frac{e+g -\sqrt{(e+g)^2-4(b+d)}}{2}\neq 0,
\end{equation}
but these may be either real or complex conjugate numbers. In case both these numbers are real it follows
that actually both of them are positive.

\subsection*{A.2: Negative Discriminant.}\label{ss:and}
 With notation as before, assume that 
\begin{equation*}\Delta=(e+g)^2-4(b+d)<0,\end{equation*} and
hence both roots $\lambda_1$ and $\lambda_2$ are complex conjugate nonreal numbers. 
In order to simplify the 
notation and to point out some important aspects, in this case we let the real numbers $\alpha$ and $\omega$
be defined by
\begin{equation}\label{e:alo}
\alpha=\frac{e+g}{2}>0,\quad \iac\omega=\frac{\sqrt{(e+g)^2-4(b+d)}}{2}.
\end{equation}
With this notation, we claim that
\begin{align}\label{e:doinep}
\exp(-\alpha t) & \biggl((u -\frac{a-c}{b+d})\cos \omega t+\frac{v  +
\alpha(u -\frac{a-c}{b+d})}{\omega}\sin \omega t\biggr) \\
&  = \exp(-t \frac{e+g-\sqrt{(e+g)^2-4(b+d)}}{2}) \times \nonumber  \\
 & \times \frac{ 4 c - 4 a + 2 e v  + 2 g v  +4u (b+d) 
- 2 v   \sqrt{(e+g)^2-4(b+d)}}{2 (e + g -  \sqrt{(e+g)^2-4(b+d)})  \sqrt{(e+g)^2-4(b+d)}} \nonumber \\
& - \exp(-t \frac{e+g+\sqrt{(e+g)^2-4(b+d)}}{2}) \times  \nonumber \\
&  \ \ \ \ \ \ \ \ \times \frac{ 4 c - 4 a + 2 e v  + 2 g v  +4u (b+d)
+ 2 v  \sqrt{(e+g)^2-4(b+d)}}{2 (e + g + \sqrt{(e+g)^2-4(b+d)})  \sqrt{(e+g)^2-4(b+d)}}, \nonumber
\end{align}
which is the homogeneous part of the unperturbed solution, compare to the formula (2.20) in 
(Gheondea-Eladi2016). 

Indeed, let us denote by $F$ the expression from the right side in \eqref{e:yetedoi}. Then, with notation as 
in \eqref{e:doinep}, we have
\begin{align*}
F & = \exp(-t\alpha+\iac \omega t) 
\frac{(c-a)+u (b+)+v  (\alpha-\iac\omega)}{(\alpha-\iac\omega) 2\iac\omega} \\
& \ \ \ \ \ -\exp(-t\alpha-\iac t\omega t)
\frac{(c-a)+u (b+)+v  (\alpha-\iac\omega)}{(\alpha+\iac\omega) 2\iac\omega} \\
& = \exp(-t\alpha) (\cos \omega t+\iac\sin \omega t) 
\frac{(c-a)+u (b+)+v  (\alpha-\iac\omega)}{(\alpha-\iac\omega) 2\iac\omega} \\
& \ \ \ \ \ -\exp(-t\alpha)(\cos \omega t-\iac \sin \omega t)
\frac{(c-a)+u (b+)+v  (\alpha-\iac\omega)}{(\alpha+\iac\omega) 2\iac\omega} \\
& = \frac{\exp(-t\alpha)}{2\iac\omega} \times \biggl[ (\cos \omega t+\iac\sin \omega t)
\biggl(\frac{(c-a)+u (b+d)}{\alpha-\iac\omega}+v \biggr) \\
& \phantom{=\frac{\exp(-t\alpha)}{2\iac\omega} \times- -}-(\cos \omega t-\iac\sin \omega t)
\biggl(\frac{(c-a)+u (b+d)}{\alpha+\iac\omega}+v \biggr)\biggr].
\end{align*}
Since,
\begin{equation*}
(\alpha+\iac\omega)(\alpha-\iac\omega)=\alpha^2+\omega^2=\frac{(e+g)^2}{4}+\frac{4(b+d)-(e+g)^2}{4}=b+d,
\end{equation*}
it follows that
\begin{align*}
F & = \frac{\exp(-t\alpha)}{2\iac\omega}\biggl[ (\cos \omega t+\iac \sin \omega t)
\biggl((u -\frac{a-c}{b+d})(\alpha+\iac\omega)+v \biggr) \\
& \ \ \ \ \ - (\cos \omega t-\iac\sin \omega t)
\biggl((u -\frac{a-c}{b+d})(\alpha-\iac\omega)+v \biggr)\biggr] \\
& = \frac{\exp(-t\alpha)}{2\iac\omega}\biggl[ (\cos \omega t+\iac \sin \omega t)
\biggl(\bigl((u -\frac{a-c}{b+d})\alpha+v \bigr)+\iac\omega(u -\frac{a-c}{b+d})\biggr) \\
& \ \ \ \ \ - (\cos \omega t-\iac\sin \omega t)
\biggl(\bigl((u -\frac{a-c}{b+d})\alpha+v \bigr)-\iac\omega(u -\frac{a-c}{b+d})\biggr)
\biggr].
\end{align*}
Observing that the expression from the rightmost side of the previous equality is of the type
\begin{equation*}
(x+\iac y)(p+\iac q)-(x-\iac y)(p-\iac q)= 2\iac (xq+yp),
\end{equation*}
it follows that
\begin{align*}
F & = \frac{\exp(-t\alpha)}{2\iac\omega} 2\iac \bigl[ \omega(u -\frac{a-c}{b+d})
\cos \omega t+ \bigl((u -\frac{a-c}{b+d})\alpha+v \bigr)\bigr] \\
& = \exp(-\alpha t) \biggl((u -\frac{a-c}{b+d})\cos \omega t+\frac{v  +
\alpha(u -\frac{a-c}{b+d})}{\omega}\sin \omega t\biggr),
\end{align*}
hence \eqref{e:doinep} is proven.

Further on, with notation as in \eqref{e:alo}, a straightforward calculation shows that, 
\begin{align}\label{e:doiper}
-\frac{ep}{\omega}\exp(-t\alpha) & \biggl[ \exp(m\alpha)H(t-m)\sin((m-t)\omega)-\exp(n\alpha)H(t-n)\sin((n-t)\omega)\biggr] \\
& = e p \heaviside(t - m) \frac{\exp((m-t)\frac{e+g -\sqrt{(e+g)^2-4(b+d)}}{2})}{ \sqrt{(e+g)^2-4(b+d)}}
\nonumber \\
& \phantom{\ \ \ \ \ \ H(t-m)}
- e p \heaviside(t - n) \frac{\exp((n-t)\frac{ e+g-\sqrt{(e+g)^2-4(b+d)}}{2})}{ \sqrt{(e+g)^2-4(b+d)}}  \nonumber \\ 
& \ \ \ \ \ - e p \heaviside(t - m) \frac{\exp((m-t)\frac{e+g+\sqrt{(e+g)^2-4(b+d)}}{2})}{ \sqrt{(e+g)^2-4(b+d)}}
\nonumber \\
& \phantom{\ \ \ \ \ \ H(t-m)}
+ e p \heaviside(t - n) \frac{\exp((n-t)\frac{e+g+\sqrt{(e+g)^2-4(b+d)}}{2})}{ \sqrt{(e+g)^2-4(b+d)}}. \nonumber
\end{align}
Consequently, from \eqref{e:thetadoi}, \eqref{e:doinep}, and \eqref{e:doiper}, we have
\begin{align}  %%%% functia de cuplare
y(t) & = \frac{a-c}{b+d}-\frac{dp}{b+d}\bigl(H(t-m)-H(t-n)\bigr) \label{e:thetadoi}\\
%%%%% solutia omogena neperturbata
& \ \ \ +\exp(-\alpha t)\biggl((u -\frac{a-c}{b+d})\cos \omega t+\frac{v  +
\alpha(u -\frac{a-c}{b+d})}{\omega}\sin \omega t\biggr) \nonumber \\
%%%%% termeni omogeni de perturbare
& \ \ \ -\frac{ep}{\omega}\exp(-t\alpha) 
\biggl[\exp(m\alpha) H(t-m)\sin((m-t)\omega)-\exp(n\alpha)H(t-n)\sin((n-t)\omega)\biggr]\nonumber\\
& = \frac{a-c}{b+d} +\exp(-\alpha t)\biggl((u -\frac{a-c}{b+d})\cos \omega t+\frac{v  +
\alpha(u -\frac{a-c}{b+d})}{\omega}\sin \omega t\biggr) \label{e:thetadoirev}\\
& \ \ \ -\frac{dp}{b+d}\bigl(H(t-m)-H(t-n)\bigr) \nonumber \\
& \ \ \ -\frac{ep}{\omega}\exp(-t\alpha) 
\biggl[\exp(m\alpha) H(t-m)\sin((m-t)\omega)-\exp(n\alpha)H(t-n)\sin((n-t)\omega)\biggr].\nonumber
\end{align}

In \eqref{e:thetadoi} we emphasised on the first row the equilibrium function and on the second and third row
the component which is faded away by the exponential with negative exponent. In \eqref{e:thetadoirev}
we emphasised on the first row the solution of the unperturbed IVP and on the second and
the third row the components added as a result of the perturbation.

Finally, using
\begin{equation*}
\sin((m-t)\omega)=\sin(m\omega)\cos(\omega t)-\cos(m\omega)\sin(\omega t),
\end{equation*}
and similarly for $n$ instead of $m$, we can represent the function $y$ as follows:
\begin{align}
y(t) & = \frac{a-c}{b+d}-\frac{dp}{b+d}\bigl(H(t-m)-H(t-n)\bigr) \label{e:thetadoiord}\\
& \ \ \ +\exp(-\alpha t) \times \nonumber \\
& \ \ \ \ \times\! \biggl[\! \biggl( (u -\frac{a-c}{b+d})\!-\!\frac{ep}{\omega}\bigl[ \exp(m\omega)\sin(m\omega) H(t-m)-\exp(n\omega)\sin(n\omega)H(t-n)\bigr]\!\biggr)\!\cos(\omega t) \nonumber \\
& \ \ \ \ \ \ \ \ +\biggl(\frac{v +\alpha(u -\frac{a-c}{b+d})}{\omega} \nonumber \\
& \ \ \ \ \ \ \ \ \ \ \ \ \ +\frac{ep}{\omega}\bigr[ \exp(m\omega)\cos(m\omega)H(t-m)-\exp(n\omega)\cos(n\omega)H(t-n)\big]\biggr)\sin(\omega t)\biggl].\nonumber
\end{align}
In this case, the above formula tells us that the solution has an evolution to the equilibrium function,
which is constantly equal to $\frac{a-c}{b+d}$, except on the interval $[m,n]$ where it has a gap, and that
there is a permanent oscillation of frequency $\omega\neq 0$ plus some perturbations on the interval $[m,n]$
and a little beyond.
%%%%%

\subsection*{A.3: The Solution of the 2nd Order ODE with Forcing Factor.}\label{ss:sod}
In order to simplify the notation and to emphasise certain factors that have interpretation in the framework
of the evolution of confidence, with respect to the notation as in the previous subsection, we denote
\begin{equation}\label{e:nota}
\alpha=\frac{e+g}{2},\quad \beta=b+d,\quad \gamma=a-c.
\end{equation}
Then, the IVP \eqref{e:2ode} becomes
\begin{align}\label{e:2odea}
y^{\prime\prime}(t) +2\alpha y^\prime(t)+ \beta y(t) & =\gamma-dp(H(t-m)-H(t-n))+ep(\delta(t-m)-\delta(t-n)), \\
y(0)  = u, \quad & \quad
y^\prime(0) = v. \nonumber
\end{align}

\begin{proposition}\label{p:2ode}
The IVP \eqref{e:2odea} has unique solution defined as follows: if $\alpha^2-\beta\neq 0$ then
\begin{align}\label{e:2odesol}
y(t) & = \frac{\gamma}{\beta} - \frac{dp}{\beta}\bigl(H(t-m)-H(t-n)\bigr)\\
& + \frac{\emath^{-\alpha t}}{2\sqrt{\alpha^2-\beta}}
\biggl[ \bigl((u-\frac{\gamma}{\beta})(\alpha+\sqrt{\alpha^2-\beta})+v\bigr)  
\emath^{t\sqrt{\alpha^2-\beta}} \nonumber 
+ \bigl((u-\frac{\gamma}{\beta})(-\alpha+\sqrt{\alpha^2-\beta})-v\bigr)
\emath^{-t\sqrt{\alpha^2-\beta}}\biggr]\nonumber \\
& + \frac{ep\emath^{-\alpha t}}{2\sqrt{\alpha^2\!-\!\beta}}\biggl[H(t\!-\!m)\emath^{m\alpha}
\bigl(\emath^{-(m-t)\sqrt{\alpha^2\!-\!\beta}}\!-\!\emath^{(m-t)\sqrt{\alpha^2\!-\!\beta}}\bigr)
 - H(t\!-\!n)\emath^{n\alpha}
\bigl(\emath^{-(n-t)\sqrt{\alpha^2\!-\!\beta}}\!-\!\emath^{(n-t)\sqrt{\alpha^2\!-\!\beta}}\bigr)\biggr]\nonumber
\end{align}
and, if $\alpha^2-\beta= 0$ then
\begin{align}\label{e:2odesoldisnul} y(t) & = 
\frac{\gamma}{\alpha^2}-\frac{dp}{\alpha^2}\bigl[H(t-m)-H(t-n)\bigr] \\
& \phantom{\frac{\gamma}{\alpha^2}+}
+\emath^{-\alpha t}\biggl[ 
(u-\frac{\gamma}{\alpha^2})+\bigl(v-(\frac{\gamma}{\alpha^2}-u)\alpha\bigr)t %\nonumber \\
% & \phantom{\frac{\gamma}{\alpha^2}+} 
-ep(m-t)H(m-t)\emath^{m\alpha}+ep(n-t)H(n-t)\emath^{n\alpha}
\biggl] .\nonumber
\end{align}
In addition, if $\alpha^2-\beta<0$ then, letting $\omega=\sqrt{\beta-\alpha^2}$, the solution $y$ defined
as in \eqref{e:2odesol} has the following representation
\begin{align}
y(t) & = \frac{\gamma}{\beta}-\frac{dp}{\beta}\bigl(H(t-m)-H(t-n)\bigr) \label{e:2odesolcom}\\
& \ \ \ +\emath^{-\alpha t} \times \nonumber \\
& \ \ \ \ \times\! \biggl[\! \biggl( (u -\frac{\gamma}{\beta})\!-\!\frac{ep}{\omega}\bigl[ \emath^{m\omega}
\sin(m\omega) H(t-m)-\emath^{n\omega}\sin(n\omega)H(t-n)\bigr]\!\biggr)\!\cos(\omega t) \nonumber \\
& \ \ \ \ \ \ \ \ +\biggl(\frac{v +\alpha(u -\frac{\gamma}{\beta})}{\omega}  +\frac{ep}{\omega}\bigl[ \emath^{m\omega}\cos(m\omega)H(t-m)-\emath^{n\omega}
\cos(n\omega)H(t-n)\bigr]\biggr)\sin(\omega t)\biggr].\nonumber
\end{align}
\end{proposition}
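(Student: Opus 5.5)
The plan is to avoid re-solving \eqref{e:2odea} from scratch and instead start from the closed form \eqref{e:yetedoi}, which was obtained from the {\tt dsolve} output in A.1 together with the identities \eqref{e:acab}--\eqref{e:depedoi}. I would rewrite it in the notation \eqref{e:nota}. Since $\sqrt{(e+g)^2-4(b+d)}=2\sqrt{\alpha^2-\beta}$, the two characteristic exponents become
\begin{equation*}
\lambda_{1,2}=\alpha\pm\sqrt{\alpha^2-\beta},
\end{equation*}
and $4c-4a+2ev+2gv+4u(b+d)=4(\beta u-\gamma)+4\alpha v$.

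\textbf{Homogeneous part.} The two unperturbed homogeneous terms then have coefficients
\begin{equation*}
\frac{(\beta u-\gamma)+v(\alpha\pm\sqrt{\alpha^2-\beta})}{2(\alpha\mp\sqrt{\alpha^2-\beta})\sqrt{\alpha^2-\beta}}.
\end{equation*}
Multiplying numerator and denominator by $\alpha\pm\sqrt{\alpha^2-\beta}$ and using $(\alpha+\sqrt{\alpha^2-\beta})(\alpha-\sqrt{\alpha^2-\beta})=\beta$ turns these into
\begin{equation*}
\frac{(u-\frac{\gamma}{\beta})(\alpha\pm\sqrt{\alpha^2-\beta})\pm v}{2\sqrt{\alpha^2-\beta}}.
\end{equation*}
This is the same manipulation already carried out for complex roots in the proof of \eqref{e:doinep}, and it gives the second line of \eqref{e:2odesol}.

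\textbf{Impulse part.} The four $ep$-terms of \eqref{e:yetedoi} regroup as
\begin{equation*}
\frac{ep\,\emath^{-\alpha t}}{2\sqrt{\alpha^2-\beta}}\,H(t-m)\,\emath^{m\alpha}\bigl(\emath^{(t-m)\sqrt{\alpha^2-\beta}}-\emath^{(m-t)\sqrt{\alpha^2-\beta}}\bigr),
\end{equation*}
together with the analogous term at $n$ with the opposite sign. In other words, these are $ep$ times the impulse response $H(s)\,\emath^{-\alpha s}\sinh(s\sqrt{\alpha^2-\beta})/\sqrt{\alpha^2-\beta}$, shifted to $s=t-m$ and $s=t-n$. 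This yields the third line of \eqref{e:2odesol} after matching the sign convention in $\emath^{\pm(m-t)\sqrt{\alpha^2-\beta}}$. The constant and step terms are already in final form by \eqref{e:acab}--\eqref{e:depedoi}.

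\textbf{Case $\alpha^2-\beta<0$.} With $\sqrt{\alpha^2-\beta}=\iac\omega$, formula \eqref{e:2odesolcom} is exactly \eqref{e:thetadoiord}, proved in A.2 from \eqref{e:doinep} and \eqref{e:doiper}. I would cite it, checking only that \eqref{e:2odesol} with $\iac\omega$ substituted agrees with \eqref{e:thetadoi}; this uses $\sinh(\iac x)=\iac\sin x$ and the expansion of $\sin((m-t)\omega)$.

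\textbf{Case $\alpha^2-\beta=0$.} I would not rely on {\tt dsolve} here. Instead I would pass to the limit $\alpha^2-\beta\to0$ in \eqref{e:2odesol}, using the series expansion as in the proof of Theorem~\ref{t:robust}:
\begin{equation*}
\frac{\sinh(s\sqrt{\alpha^2-\beta})}{\sqrt{\alpha^2-\beta}}\to s,\qquad \cosh(s\sqrt{\alpha^2-\beta})\to 1.
\end{equation*}
This produces $\emath^{-\alpha t}\bigl[(u-\gamma/\alpha^2)+(v+\alpha(u-\gamma/\alpha^2))t\bigr]$ and the terms $ep(t-m)H(t-m)\emath^{m\alpha}$, which give \eqref{e:2odesoldisnul}. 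Independently, I would confirm \eqref{e:2odesoldisnul} by direct substitution into \eqref{e:2odea}, using the double-root homogeneous basis $\emath^{-\alpha t},\,t\emath^{-\alpha t}$.

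\textbf{Uniqueness and the main obstacle.} Uniqueness follows because the difference $w$ of two distributional solutions satisfies $w''+2\alpha w'+\beta w=0$ in $\mathcal D'$ with $w(0)=w'(0)=0$. Such a $w$ is smooth, hence identically zero by the standard uniqueness theorem for linear ODEs with constant coefficients.

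The main obstacle is the verification that the displayed function really solves \eqref{e:2odea} in the distributional sense, rather than trusting {\tt dsolve}. I would check this interval by interval on $(0,m)$, $(m,n)$ and $(n,\infty)$, where the equation is classical, and then compute the jumps of $y$ and $y'$ at $m$ and $n$.

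The delicate point is the term $-\frac{dp}{\beta}(H(t-m)-H(t-n))$. It makes $y$ itself discontinuous, so $y''$ picks up $\delta'$ terms, and these must be matched against the right-hand side. I would first test this carefully at $t=m$. If the jump of $y$ is not compensated, the step contribution must instead be the convolution of the step with the impulse response, which is continuous at $m$ and $n$. In that case the statement has to be read through the {\tt dsolve} derivation, and I would flag the discrepancy rather than paper over it.
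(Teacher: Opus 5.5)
Your route is the paper's own: derive the $\alpha^2\neq\beta$ formula from \eqref{e:yetedoi}, cite \eqref{e:thetadoiord} for $\alpha^2<\beta$, and treat $\alpha^2=\beta$ by a limit or a direct check. This route cannot prove the proposition, because the displayed functions do not solve \eqref{e:2odea}. The test you postpone to your last paragraph is exactly where this shows.

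Near $t=m$ the homogeneous part is smooth, and the $ep$-term is $ep\,H(t-m)\emath^{-\alpha(t-m)}\sinh\bigl((t-m)\sqrt{\alpha^2-\beta}\bigr)/\sqrt{\alpha^2-\beta}$, which vanishes at $t=m$. Hence $y(m^+)-y(m^-)=-dp/\beta\neq0$. So $y''+2\alpha y'+\beta y$ picks up the extra terms $-\frac{dp}{\beta}\bigl(\delta'(t-m)+2\alpha\,\delta(t-m)\bigr)$, with the opposite terms at $n$. Thus \eqref{e:2odesol} is not a distributional solution. If one only asks the equation to hold off $\{m,n\}$, the $\delta$-terms become invisible and uniqueness fails. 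Under neither reading is the statement true for $p\neq0$, so reading it ``through the \texttt{dsolve} derivation'' does not rescue it.

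The error is in the \texttt{dsolve} output displayed in A.1. Its $ep$-terms carry the constants $\emath^{m\lambda_{1,2}}$, but its $dp$-terms do not. In effect the antiderivative $\emath^{\lambda\tau}H(\tau-m)/\lambda$, which jumps at $m$, was used instead of $(\emath^{\lambda\tau}-\emath^{\lambda m})H(\tau-m)/\lambda$. Your remark that the step terms are ``already in final form'' by \eqref{e:acab}--\eqref{e:depedoi} imports this error. The correct solution is the convolution you suspected. Each of the three formulas needs the additional term
\begin{equation*}
\frac{dp}{\beta}\Bigl[H(t-m)\,\emath^{-\alpha(t-m)}\Bigl(\cosh\bigl((t-m)\sqrt{\alpha^2-\beta}\bigr)+\frac{\alpha\sinh\bigl((t-m)\sqrt{\alpha^2-\beta}\bigr)}{\sqrt{\alpha^2-\beta}}\Bigr)-\bigl(\text{same with $n$ in place of $m$}\bigr)\Bigr].
\end{equation*}
The round-bracket factor becomes $\cos(\omega(t-m))+\frac{\alpha}{\omega}\sin(\omega(t-m))$ if $\alpha^2<\beta$, and $1+\alpha(t-m)$ if $\alpha^2=\beta$. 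This term makes $y$ continuous at $m$ and $n$ while keeping the jumps $ep$ and $-ep$ of $y'$. The paper's proof has the same defect, since its verification is interval by interval and never tests jump conditions. Your distributional check is what exposes it, but you must carry it out and conclude that the statement needs correcting, not leave it conditional.

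In the other two cases you assert agreements that your own computations contradict.

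For $\alpha^2=\beta$, your limit $ep(t-m)H(t-m)\emath^{m\alpha}$ inside $\emath^{-\alpha t}[\cdots]$ is correct. But \eqref{e:2odesoldisnul} has $-ep(m-t)H(m-t)\emath^{m\alpha}$, which is supported on $t<m$. With it, $y(0)=u+ep\bigl(n\emath^{n\alpha}-m\emath^{m\alpha}\bigr)\neq u$, and the impulse generated at $m$ is $-ep\,\delta(t-m)$. So ``which give \eqref{e:2odesoldisnul}'' is false, and your direct substitution would fail.

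For $\alpha^2<\beta$, expanding $\sin((m-t)\omega)$ in \eqref{e:thetadoi} gives the factors $\emath^{m\alpha},\emath^{n\alpha}$. It does not give the $\emath^{m\omega},\emath^{n\omega}$ printed in \eqref{e:thetadoiord} and \eqref{e:2odesolcom}, so that formula cannot simply be cited.

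A minor slip: in your homogeneous coefficients the numerator should carry $v(\alpha\mp\sqrt{\alpha^2-\beta})$, matching the denominator.

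With these corrections, your computations together with your uniqueness argument (which is sound) prove a corrected proposition. The proposition as stated cannot be proved.
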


\begin{proof} Assuming that $\alpha^2-\beta\neq 0$, the formula \eqref{e:2odesol} is a consequence of
the formula \eqref{e:yetedoi}, with some additional calculations that we leave to the reader.

Assuming that $\alpha^2-\beta<0$, the formula \eqref{e:2odesolcom} is a consequence of the formula
\eqref{e:thetadoiord}.

It remains to prove that, in case $\alpha^2-\beta=0$ the formula \eqref{e:2odesoldisnul} provides the solution
to the IVP \eqref{e:nota}. To this end, 
since the 2nd order linear ODE in \eqref{e:nota} has constant coefficients and is nonhomogeneous, it has a
unique global solution which is obtained as the sum of a particular solution $y_{\mathrm{p}}(t)$
and the general solution $y_\mathrm{g}$ of the associated homogeneous ODE
\begin{equation}\label{e:2odehom}
y^{\prime\prime}+2\alpha y^\prime+\beta=0.
\end{equation}
In the particular case $\alpha^2-\beta=0$
the characteristic polynomial of the homogeneous 2nd order ODE associated to \eqref{e:2ode} 
has only one root $\lambda=-\alpha$ of multiplicity $2$ and hence, the general solution of the ODE 
\eqref{e:2odehom} is 
\begin{equation*}
y_\mathrm{g}(t)=C_1 \exp(-\alpha t)+C_2t\exp(-\alpha t),
\end{equation*}
and we have to determine the constants $C_1$ and $C_2$ such that initial conditions $y(0)=u$ and 
$y^\prime(0)=v$ are fulfilled.

On the other hand, in order to obtain a particular solution $y_\mathrm{p}$ 
we use the information from \eqref{e:2odesol} and claim that
\begin{equation}\label{e:2solpar}
y_\mathrm{p}(t)=
\frac{\gamma}{\alpha^2}-\frac{dp}{\alpha^2}\bigl[H(t-m)-H(t-n)\bigr] 
-ep\emath^{-\alpha t}\bigl[
(m-t)H(m-t)\emath^{m\alpha}-(n-t)H(n-t)\emath^{n\alpha}
\bigl] .\nonumber
\end{equation}
This can be verified in at least two different fashions: first, by performing the calculations for each of the
three cases, $t\in (-\infty,m)$, $t\in (m,n)$, and $t\in(n,+\infty)$ and showing that it satisfies the 2nd order ODE
in \eqref{e:nota} and, second, by keeping all parameters and $t$ fixed, except $\alpha^2-\beta\ra 0$ and 
showing
that the nonhomogeneous part from the right hand side of \eqref{e:2odesol} converges to $y_\mathrm{p}(t)$.
Then, we determine the coefficients $C_1$ and $C_2$ such that $y=y_\mathrm{g}+y_\mathrm{p}$ satisfies
the initial conditions $y(0)=u$ and $y^\prime(0)=v$ and get the formula \eqref{e:2odesoldisnul}.
\end{proof}

\end{document}